\def\marker{\>\hbox{${\vcenter{\vbox{
    \hrule height 0.4pt\hbox{\vrule width 0.4pt height 6pt
    \kern6pt\vrule width 0.4pt}\hrule height 0.4pt}}}$}\>}
\newtheorem{theorem}{Theorem} 
\newtheorem{theorem*}{Theorem} 
\newtheorem{lemma}[theorem]{Lemma}
\newtheorem{conjecture}{Conjecture}
\theoremstyle{definition}
\newtheorem{question}{Question}
\theoremstyle{remark}
\newcommand{\RR}{\mathbb{R}}
\newcommand{\CL}[1]{\left\lceil #1 \right\rceil}
\newcommand{\FL}[1]{\left\lfloor #1 \right\rfloor}
\newcommand{\Aut}{\rm Aut}
\title{List-Distinguishing Cartesian Products of Cliques}
\author{Michael Ferrara\footnotemark[1], Zoltan F\"uredi\footnotemark[2], Sogol Jahanbekam\footnotemark[3],  and Paul S.\ Wenger\footnotemark[3]}
\begin{document}

\maketitle

\begin{abstract}
The {\it distinguishing number} of a graph $G$, denoted $D(G)$, is the minimum number of colors needed to produce a coloring of the vertices of $G$ so that every nontrivial isomorphism interchanges vertices of different colors.
A {\it list assignment} $L$ on a graph $G$ is a function that assigns each vertex of $G$ a set of colors.
An $L$-coloring  of $G$ is a coloring in which each vertex is colored with a color from $L(v)$.
The {\it list distinguishing number} of $G$, denoted $D_{\ell}(G)$ is the minimum $k$ such that every list assignment $L$ that assigns a list of size at least $k$ to every vertex permits a distinguishing $L$-coloring.
In this paper, we prove that when and $n$ is large enough, the distinguishing and list-distinguishing numbers of $K_n\Box K_m$ agree for almost all $m>n$, and otherwise differ by at most one.  As a part of our proof, we give (to our knowledge) the first application of the Combinatorial Nullstellensatz to the graph distinguishing problem and also prove an inequality for the binomial distribution that may be of independent interest.\\  

\noindent{\bf Keywords: distinguishing, list distinguishing, 05C60, 05C15}
\end{abstract}

\renewcommand{\thefootnote}{\fnsymbol{footnote}}
\footnotetext[1]{
Department of Mathematical and Statistical Sciences, University of Colorado Denver, Denver, CO; {\tt michael.ferrara@ucdenver.edu.}\\{Research was supported in part by Simons Foundation Collaboration Grants \#206692 and \#426971.}
}
\footnotetext[2]{
Alfr\' ed R\' enyi Institute of Mathematics, Budapest, Hungary; {\tt furedi@renyi.hu.}\\
{Research was supported in part by grant K116769
from the National Research, Development and Innovation Office NKFIH, and
by the Simons Foundation Collaboration Grant \#317487.}
}
\footnotetext[3]{
School of Mathematical Sciences, Rochester Institute of Technology, Rochester, NY;
{\tt sxjsma@rit.edu, pswsma@rit.edu}.}
\renewcommand{\thefootnote}{\arabic{footnote}}

\baselineskip18pt

\section{Introduction}

Given a graph $G$, a $k$-coloring $\phi:X\to\{1,\dots,k\}$ of $G$ is \textit{distinguishing} if the only automorphism of $G$ that fixes $\phi$ is the identity automorphism.  The minimum $k$ for which $G$ has a distinguishing $k$-coloring is called the \textit{distinguishing number} of $G$, and is denoted $D(G)$.  The distinguishing number of a graph $G$ is, in some sense, a measure of the resilience of $\Aut(G)$, in that a distinguishing coloring serves to ``break" all of the symmetries of $G$.  

Inspired by a problem of Rubin \cite{Rdist79}, which creatively asked for the distinguishing number of the cycle $C_n$, Albertson and Collins initiated the study of distinguishing numbers in \cite{AC}.  Since that initial work, the distinguishing number of a graph has been determined for a number of graph classes, including planar graphs \cite{ACD}, Cartesian powers \cite{albertson,KZ}, forests \cite{cheng06} and interval graphs \cite{cheng09}.  The distinguishing problem has also been studied extensively for infinite graphs (see \cite{LeInf16} for a recent example). 

In 2011, Ferrara, Flesch, and Gethner~\cite{FFG} first considered the natural extension of graph distinguishing to list colorings.
A {\it list assignment} $L$ on a graph $G$ is a function that assigns each vertex of $G$ a set of colors.
An {\it $L$-coloring} of $G$ is a coloring in which each vertex is colored with a color from $L(v)$.
List coloring was first introduced in the setting of proper colorings by Erd\H os, Rubin, and Taylor~\cite{ERT}, and has been studied extensively across numerous settings.  The {\it list-distinguishing number} of $G$, denoted $D_{\ell}(G)$ is the minimum $k$ such that every list assignment $L$ that assigns a list of size at least $k$ to every vertex permits a distinguishing $L$-coloring.

In~\cite{FFG}, Ferrara, Flesch, and Gethner posed the following question.
\begin{question}\label{q:FFG}
Is there a graph $G$ for which $D(G)\neq D_{\ell}(G)$?
\end{question}
Question~\ref{q:FFG} is unanswered, but in subsequent years, there has been an accumulation of evidence suggesting that the negative answer to Question~\ref{q:FFG} is correct.  This includes proofs that $D(G)=D_{\ell}(G)$ when $G$ has a dihedral automorphism group \cite{FFG}, is a forest \cite{FGHSW}, and when $G$ is an interval graph \cite{IW}.  

We note that the problem of how one can distinguish the vertices of a graph, be it through coloring or the identification of a ``special" set of vertices, has been broadly studied.  As a significant recent example, Babai's proof of the existence of a near polynomial-time algorithm for the graph isomorphism problem \cite{babaiarXiv} relies on the idea of a \textit{distinguishing set} of vertices, which is a set $S$ of vertices such that every vertex not in the set has a unique subset of $S$ in its neighborhood \cite{babai80}.  Further examples include identifying codes \cite{KCL}, many variants of vertex-distinguishing edge-colorings \cite{BS}, and a number of other concepts throughout the literature.

\section{Results}

In this paper, we study the list-distinguishing number of Cartesian products of complete graphs.  The distinguishing number for Cartesian products of complete graphs was determined independently by Imrich, Jerebic, and Klav\v zar~\cite{IJK} and also Fisher and Isaak~\cite{FI}.
Interestingly, Fisher and Isaak's result was not phrased in terms of graph distinguishing, but rather in the setting of a particular class of edge colorings of the complete bipartite graph.

\begin{theorem}[\cite{IJK}]\label{thm:distknkm}
Let $n$, $m$, $k$ be positive integers with $k\ge 2$.
If $(k -1)^n < m \le k^n$, then
$$D(K_n\Box  K_m) = \begin{cases} k& \text{if } m \le k^n - \CL{\log_k n}- 1;\\
k + 1& \text{if } m \ge k^n- \CL{\log_k n} + 1.\end{cases}$$
If $m = k^n - \CL{\log_k n}$, then $D(K_n \Box K_m)$ is either $k$ or $k + 1$ and can be computed recursively in $O(log^*(m))$ time.
\end{theorem}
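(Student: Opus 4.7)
The plan is to reformulate a $k$-coloring of $K_n\Box K_m$ (taking $n\ne m$; the case $n=m$ would require a minor adjustment for the extra transposition automorphism) as an $n\times m$ matrix $M$ over $[k]:=\{1,\ldots,k\}$. Since $\Aut(K_n\Box K_m)=S_n\times S_m$ acting by row and column permutations, $M$ is distinguishing iff no non-trivial $(\pi,\sigma)\in S_n\times S_m$ leaves $M$ invariant. Writing the columns of $M$ as $c_1,\ldots,c_m\in[k]^n$ and setting $C=\{c_1,\ldots,c_m\}$ and $F=[k]^n\setminus C$ with $|F|=t:=k^n-m$, this decomposes into: (a) the $c_j$ are pairwise distinct; (b) the $n$ rows of $M$ are pairwise distinct; and (c) no non-trivial $\pi\in S_n$ preserves $C$ (equivalently, $F$) setwise. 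So the task is to determine for which $t$ some $F\subseteq[k]^n$ of size $t$ satisfies (a)--(c).

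For the lower bound when $t\le\CL{\log_k n}-1$, I would use pigeonhole. For any $F=\{f_1,\ldots,f_t\}$, form the $n\times t$ matrix whose columns are the $f_j$ and consider its row vectors $r_1,\ldots,r_n\in[k]^t$. A short case check (splitting on whether $n$ is a power of $k$) gives $k^t<n$, so by pigeonhole $r_i=r_{i'}$ for some $i\ne i'$, and the transposition $(i\,i')$ then fixes each $f_j$ pointwise and hence $F$ setwise, violating (c). Therefore $D(K_n\Box K_m)\ge k+1$.

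For the upper bound when $t\ge\CL{\log_k n}+1$, I would construct $F$ explicitly. Let $s=\CL{\log_k n}$, fix an injection $\phi:[n]\to[k]^s$, and take the first $s$ elements of $F$ so that the rows of the induced $n\times s$ sub-matrix realize $\phi$; the row vectors of the full $n\times t$ $F$-matrix are then already pairwise distinct in their first $s$ coordinates, giving (a) and providing (b) whenever $F$ does not exhaust the separators of some row-pair (automatic when $t<k^{n-1}(k-1)$ and arranged directly in the complementary regime where $m$ is small). The remaining $t-s\ge 1$ elements of $F$ are chosen to break any residual coordinate-permutation symmetry of the row-set, for instance by including a column whose $S_n$-orbit meets $F$ in a single element (forcing every stabilizing $\pi$ to fix it) and iterating until the stabilizer collapses.

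The main obstacle is the tight case $t=\CL{\log_k n}+1$ with $n=k^s$: then $\phi$ is a bijection, so the first $s$ columns of $F$ exhibit the full coordinate symmetry $S_s$, and a single extra column $f_{s+1}$ encoded by a function $y:[k]^s\to[k]$ must by itself make the row-set asymmetric under every non-identity $\sigma\in S_{s+1}$. A counting argument showing that $y$'s invariant under some non-identity $\sigma$ form an exponentially small fraction of all $y:[k]^s\to[k]$ settles this for all but a finite list of small $(n,k)$, which are verified by hand. The delicate boundary case $t=\CL{\log_k n}$ (where $D$ is either $k$ or $k+1$) I would address by recursion on $n$: fixing a carefully chosen column reduces the problem to a strictly smaller instance, matching the $O(\log^*m)$ algorithm described in the theorem.
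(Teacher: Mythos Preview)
This theorem is not proved in the present paper: it is quoted from~\cite{IJK} (with an independent proof noted in~\cite{FI}) and used only as a known ingredient in the proofs of Lemma~\ref{lem:nn+1} and Lemma~\ref{lem:cubic}. There is therefore no in-paper argument to compare your proposal against.

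That said, your reformulation is the standard one and matches the approach of the source papers: for $n\ne m$, a $k$-coloring is distinguishing iff its column set $C\subseteq[k]^n$ has trivial setwise stabilizer under the coordinate action of $S_n$, and passing to the complement $F=[k]^n\setminus C$ of size $t=k^n-m$ is precisely the move that makes the threshold $t\approx\log_k n$ visible. Your pigeonhole lower bound for $t\le\lceil\log_k n\rceil-1$ is correct. One small redundancy: your condition~(b) already follows from~(a) and~(c), since two equal rows of $M$ would give a transposition fixing every column of $C$ and hence stabilizing $C$.

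The upper-bound half of your sketch has a soft spot. You build an asymmetric $F$ of size $s+1=\lceil\log_k n\rceil+1$ and then say the remaining $t-s-1$ elements are ``chosen to break any residual symmetry,'' but enlarging an asymmetric set can \emph{create} symmetry (in the extreme, $F=[k]^n$ is fully $S_n$-invariant), so you still owe an argument that an asymmetric $t$-subset exists for \emph{every} $t$ in the required range, not just for $t=s+1$. This is not hard to supply---for instance, one can show that from any asymmetric $F$ with $0<|F|<k^n$ some single element can be added (or deleted) while keeping the stabilizer trivial---but it is a genuine step that your plan elides. Your outline for the tight case $t=\lceil\log_k n\rceil+1$ with $n=k^s$ and for the recursive boundary case is in the right spirit and is roughly how~\cite{IJK} proceeds.
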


Our main result is the following.

\begin{theorem}\label{thm:listdistknkm}
Let $n$, $m$, and $k$ be positive integers.
If $n< m\le k^n(1-\frac{\log_{1.09}n}{\sqrt{2n}})$, then
$$D_{\ell}(K_n\Box K_m)\le k.$$
\end{theorem}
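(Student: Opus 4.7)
My plan is to view a $k$-coloring of $K_n \Box K_m$ as an $n \times m$ matrix $M$ whose $(i,j)$-entry is the color of the vertex in row $i$ and column $j$. Since $n < m$, the automorphism group of $K_n \Box K_m$ is $S_n \times S_m$ acting by row and column permutations, so $M$ is distinguishing if and only if (i) the $m$ columns of $M$ are pairwise distinct as vectors, and (ii) the set of columns is not preserved setwise by any nontrivial coordinate permutation $\sigma \in S_n$. The task is to choose $M_{i,j} \in L(i,j)$ for every cell so that $M$ has these two properties.

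I plan to apply the Combinatorial Nullstellensatz, treating each $x_{i,j}$ as an indeterminate constrained to the size-$k$ list $L(i,j)$. The goal is to build a polynomial $P(\mathbf{x}) \in \mathbb{Q}[x_{i,j}\colon i\in[n],\ j\in[m]]$ of degree at most $k-1$ in each variable such that $P(\mathbf{x}) \ne 0$ implies the associated matrix is distinguishing, and to show that the coefficient of the top monomial $\prod_{i,j} x_{i,j}^{k-1}$ in $P$ is nonzero. The Nullstellensatz will then produce a point of $\prod_{i,j} L(i,j)$ at which $P \ne 0$, giving the desired distinguishing $L$-coloring. A natural first try is a product of ``symmetry-breaker'' factors---one per pair of columns enforcing distinctness and one per nontrivial $\sigma$ enforcing the failure of $\sigma$-invariance. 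A naive version of this polynomial exceeds the per-variable degree budget of $k-1$, so a key step is to pack several breakers into each low-degree factor, for instance by encoding the $\sigma$-invariance condition through orbit sums of $\sigma$ on $[k]^n$ rather than through each orbit separately.

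The top coefficient of $P$ can be computed via Lagrange interpolation over the size-$k$ list domains, yielding a signed weighted count of matrices that violate one of the two distinguishing conditions; showing this coefficient is nonzero is equivalent to showing the weighted count of ``bad'' matrices does not cancel the corresponding count over distinguishing matrices. This is exactly where the paper's inequality for the binomial distribution is used: it controls the number of $\sigma$-invariant $m$-subsets of $[k]^n$ as a fraction of $\binom{k^n}{m}$, and in the range $m \le k^n(1 - \log_{1.09} n / \sqrt{2n})$ this fraction is tiny enough that, even after a union bound over all $n!-1$ nontrivial $\sigma$, the weighted count of bad matrices is strictly smaller in magnitude than that of distinguishing matrices. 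The main technical obstacle will be handling transpositions: their orbit partition of $[k]^n$ consists almost entirely of $2$-element orbits, making $\sigma$-invariance most generic for them, and it is precisely their contribution that pins down the slack $\log_{1.09} n / \sqrt{2n}$ appearing in the theorem's upper bound on $m$.
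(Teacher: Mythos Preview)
Your plan diverges from the paper and has a real gap at the construction of $P$. The paper applies the Combinatorial Nullstellensatz \emph{only} to the base case $K_n\Box K_{n+1}$ (Lemma~\ref{lem:nn+1}), where the budget of degree $1$ per variable exactly accommodates $\binom{n+1}{2}$ column-separating linear factors and $\binom{n}{2}$ row-separating linear factors. It then bootstraps by two greedy constructions: Lemma~\ref{lem:cubic} reserves a block $A$ of about $n+1$ columns, colors $G[V(A)]$ from the size-$2$ lists via Lemma~\ref{lem:nn+1} to pin down the rows, and then greedily gives each remaining column a color vector whose color \emph{multiset} avoids those occurring in $A$; Theorem~\ref{thm:k^n} repeats this with $|A|=\lceil\log_{1.09}n\rceil$ and lists of size $k$. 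Lemma~\ref{lem:binomdistbound} enters only through Lemma~\ref{lem:bigcoeffk}, which bounds, for a single column with size-$k$ lists, how many of its $k^n$ list-colorings can realize a prescribed color multiset; subtracting $|A|$ such bounds from $k^n$ is exactly what yields the slack $\log_{1.09}n/\sqrt{2n}$ in the hypothesis on $m$.

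In your direct-Nullstellensatz plan the degree obstruction is fatal and is not addressed. Any factor witnessing ``column $j\neq$ column $j'$'' must involve variables from both columns, so each $x_{i,j}$ appears in at least $m-1$ of the $\binom{m}{2}$ column-distinctness factors, forcing $\deg_{x_{i,j}}P\ge m-1$; the Nullstellensatz budget is $k-1$, while $m$ may be as large as $k^n(1-o(1))$. Your ``packing'' remark treats only the $n!-1$ row-symmetry factors, not the $\binom{m}{2}$ column factors where the blow-up occurs, and there is no low-degree polynomial that vanishes exactly when two list-colored columns agree. The Lagrange-interpolation route does not escape this either: if instead $P$ is the degree-$(k-1)$ interpolant of the distinguishing indicator on $\prod_{i,j}L(i,j)$, its top coefficient is a list-dependent signed sum over \emph{good} matrices with weights $\prod_{i,j}\prod_{c\in L(i,j)\setminus\{M_{i,j}\}}(M_{i,j}-c)^{-1}$, and proving noncancellation is no easier than exhibiting a good $M$ directly. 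Finally, Lemma~\ref{lem:binomdistbound} bounds a single term $\binom{n}{a}p^a(1-p)^{n-a}$; it does not count $\sigma$-invariant $m$-subsets of $[k]^n$, and in any case the columns here range over $\prod_i L(i,j)$ rather than $[k]^n$, so a union bound over $S_n$ of the kind you describe is not what controls the error term.
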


Comparing Theorems~\ref{thm:distknkm} and~\ref{thm:listdistknkm} in light of Question~\ref{q:FFG}, we see that Theorem~\ref{thm:listdistknkm} shows that for $n$ sufficiently large, the distinguishing and list-distinguishing numbers of $K_n\Box K_m$ agree for almost all $m$, and otherwise differ by at most one.

Throughout the paper we treat the vertices of $K_n\Box K_m$ as the points on an integer lattice with $n$ rows and $m$ columns.
Each copy of $K_m$ in the product will correspond to a row of the lattice, and each copy of $K_n$ in the product will correspond to a column of the lattice (see Figure~\ref{fig:knkm}).
The vertices will be labeled by $v_{i,j}$ with $1\le i\le n$ and $1\le j\le m$, with vertex $v_{1,1}$ in the top left corner and vertex $v_{n,m}$ in the bottom right corner (like the entries of an $n\times m$ matrix).

\begin{figure}
\centering
\begin{tikzpicture}

\draw[fill] (0,0) circle (2pt);
\draw[fill] (1.5,0) circle (2pt);
\draw[fill] (3,0) circle (2pt);
\draw[fill] (4.5,0) circle (2pt);
\draw[fill] (7,0) circle (2pt);

\draw[fill] (0,2.5) circle (2pt);
\draw[fill] (1.5,2.5) circle (2pt);
\draw[fill] (3,2.5) circle (2pt);
\draw[fill] (4.5,2.5) circle (2pt);
\draw[fill] (7,2.5) circle (2pt);
\draw[fill] (0,4) circle (2pt);
\draw[fill] (1.5,4) circle (2pt);
\draw[fill] (3,4) circle (2pt);
\draw[fill] (4.5,4) circle (2pt);
\draw[fill] (7,4) circle (2pt);
\draw[fill] (0,5.5) circle (2pt);
\draw[fill] (1.5,5.5) circle (2pt);
\draw[fill] (3,5.5) circle (2pt);
\draw[fill] (4.5,5.5) circle (2pt);
\draw[fill] (7,5.5) circle (2pt);

\node at (0,5.2) {$v_{1,1}$};
\node at (0,3.7) {$v_{2,1}$};
\node at (1.5,5.2) {$v_{1,2}$};
\node at (1.5,3.7) {$v_{2,2}$};
\node at (0,-.3) {$v_{n,1}$};
\node at (7,5.2) {$v_{1,m}$};
\node at (7,-.3) {$v_{n,m}$};

\node at (0,1.25) {\vdots};
\node at (1.5,1.25) {\vdots};
\node at (3,1.25) {\vdots};
\node at (4.5,1.25) {\vdots};
\node at (7,1.25) {\vdots};

\node at (5.75,0) {\dots};
\node at (5.75,2.5) {\dots};
\node at (5.75,4) {\dots};
\node at (5.75,1.25) {$\ddots$};
\node at (5.75,5.5) {\dots};

\draw[line width=2] (-.5,-.5) -- (7.5,-.5) to [bend right =90] (7.5,.5) -- (-.5,.5) to [bend right=90] (-.5,-.5);
\draw[shift={(0,2.5)}, line width=2] (-.5,-.5) -- (7.5,-.5) to [bend right =90] (7.5,.5) -- (-.5,.5) to [bend right=90] (-.5,-.5);
\draw[shift={(0,4)}, line width=2] (-.5,-.5) -- (7.5,-.5) to [bend right =90] (7.5,.5) -- (-.5,.5) to [bend right=90] (-.5,-.5);
\draw[shift={(0,5.5)}, line width=2] (-.5,-.5) -- (7.5,-.5) to [bend right =90] (7.5,.5) -- (-.5,.5) to [bend right=90] (-.5,-.5);

\draw[line width=2] (-.5,-.5) to [bend right=90] (.5,-.5) to (.5,6) to [bend right=90] (-.5, 6) to (-.5,-.5);
\draw[shift={(1.5,0)}, line width=2] (-.5,-.5) to [bend right=90] (.5,-.5) to (.5,6) to [bend right=90] (-.5, 6) to (-.5,-.5);
\draw[shift={(3,0)}, line width=2] (-.5,-.5) to [bend right=90] (.5,-.5) to (.5,6) to [bend right=90] (-.5, 6) to (-.5,-.5);
\draw[shift={(4.5,0)}, line width=2] (-.5,-.5) to [bend right=90] (.5,-.5) to (.5,6) to [bend right=90] (-.5, 6) to (-.5,-.5);
\draw[shift={(7,0)}, line width=2] (-.5,-.5) to [bend right=90] (.5,-.5) to (.5,6) to [bend right=90] (-.5, 6) to (-.5,-.5);

\node at (-1.25,0) {$K_{m}$};
\node at (-1.25,2.5) {$K_{m}$};
\node at (-1.25,4) {$K_{m}$};
\node at (-1.25,5.5) {$K_{m}$};

\node at (0,-1.25) {$K_{n}$};
\node at (1.5,-1.25) {$K_{n}$};
\node at (3,-1.25) {$K_{n}$};
\node at (4.5,-1.25) {$K_{n}$};
\node at (7,-1.25) {$K_{n}$};

\end{tikzpicture}

\caption{$K_n\Box K_m$.}\label{fig:knkm}
\end{figure}

To prove Theorem~\ref{thm:listdistknkm}, we begin by determining the list distinguishing number of $K_n\Box K_{n+1}$.
We prove this using the Combinatorial Nullstellensatz.

\begin{theorem}[Alon~\cite{Alon}]
Let $F$ be an arbitrary field, and let $f=f(x_1,\ldots,x_n)$ be a polynomial in $F[x_1,\ldots,x_n]$.
Suppose that the degree of $f$ is $\sum_{i=1}^n t_i$ where each $t_i$ is a nonnegative integer, and suppose that the coefficient of $\prod_{i=1}^n x_i^{t_i}$ in $f$ is nonzero.
If $S_1,\ldots,S_n$ are subsets of $F$ with $|S_i|>t_i$, then there are $s_1\in S_1, s_2\in S_2,\ldots,s_n\in S_n$ so that
$$f(s_1,\ldots,s_n)\neq 0.$$
\end{theorem}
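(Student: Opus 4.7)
The plan is to argue by contrapositive: assuming that $f$ vanishes on every point of $S_1 \times \cdots \times S_n$, I will derive that the coefficient of $\prod_{i=1}^n x_i^{t_i}$ in $f$ must also be zero, contradicting the hypothesis. The engine of the argument is a preliminary reduction lemma that I will prove first: if $h \in F[x_1,\ldots,x_n]$ satisfies $\deg_{x_i}(h) \le |S_i|-1$ for every $i$ and vanishes on the grid $S_1 \times \cdots \times S_n$, then $h \equiv 0$. Once this is in hand, the theorem will follow by reducing $f$ modulo the coordinatewise vanishing polynomials on the grid.

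To prove the lemma I would induct on $n$. The base case $n=1$ is the familiar fact that a univariate polynomial of degree less than $|S_1|$ with $|S_1|$ distinct roots must be identically zero. For the inductive step, expand
\[
h(x_1,\ldots,x_n) \;=\; \sum_{j=0}^{|S_n|-1} h_j(x_1,\ldots,x_{n-1})\, x_n^j,
\]
and fix any point $(a_1,\ldots,a_{n-1}) \in S_1 \times \cdots \times S_{n-1}$. The univariate polynomial $h(a_1,\ldots,a_{n-1},x_n)$ has degree less than $|S_n|$ and vanishes on all of $S_n$, so each coefficient $h_j(a_1,\ldots,a_{n-1})$ is zero. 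Since this holds for every choice of $(a_1,\ldots,a_{n-1})$, each $h_j$ vanishes on the grid $S_1 \times \cdots \times S_{n-1}$; the inductive hypothesis yields $h_j \equiv 0$, and hence $h \equiv 0$.

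For the main theorem, introduce for each $i$ the polynomial $g_i(x_i) := \prod_{s \in S_i}(x_i - s)$, which has degree $|S_i|$ and vanishes on $S_i$. Writing $g_i(x_i) = x_i^{|S_i|} - r_i(x_i)$ with $\deg(r_i) < |S_i|$, the relation $x_i^{|S_i|} = r_i(x_i)$ holds on $S_i$. Repeatedly substitute $x_i^{|S_i|}$ by $r_i(x_i)$ in $f$ until no term contains $x_i^{|S_i|}$ for any $i$. The result is a polynomial $\tilde f$ with $\deg_{x_i}(\tilde f) \le |S_i|-1$ whose values on $S_1 \times \cdots \times S_n$ match those of $f$ and are therefore all zero; the lemma then forces $\tilde f \equiv 0$.

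The main technical point---and the step I expect to be the principal obstacle---is tracking the coefficient of the distinguished monomial $\prod_{i=1}^n x_i^{t_i}$ through the reduction. Two observations together show this coefficient survives: first, since $t_i < |S_i|$ for every $i$, the monomial $\prod x_i^{t_i}$ itself is never touched by any substitution; second, replacing a factor $x_i^{|S_i|}$ by the polynomial $r_i(x_i)$ of degree at most $|S_i|-1$ strictly lowers the total degree of the affected monomial. Since $\prod x_i^{t_i}$ has total degree $\sum t_i = \deg f$, no other monomial of $f$ can produce a contribution to it via a strictly degree-lowering substitution. Hence the coefficient of $\prod x_i^{t_i}$ in $\tilde f$ equals its coefficient in $f$, which is nonzero---contradicting $\tilde f \equiv 0$ and completing the proof.
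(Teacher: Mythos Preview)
The paper does not supply a proof of this statement: it is quoted as a tool from Alon~\cite{Alon} and used as a black box in the proof of Lemma~\ref{lem:nn+1}, so there is no in-paper argument to compare against. Your proof is correct and is essentially Alon's original argument---the vanishing lemma for low-degree polynomials on a product grid, followed by reduction of $f$ modulo the $g_i$ and the degree-drop observation that the coefficient of $\prod x_i^{t_i}$ is unaffected.
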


We adopt the notation $f\left[\prod_{i=1}^n x_i^{t_i}\right]$ to denote the coefficient of $\prod_{i=1}^n x_i^{t_i}$ in $f$.

\begin{lemma}\label{lem:nn+1}
For $n\ge 1$, $D_{\ell}(K_{n}\Box K_{n+1})=2$.
\end{lemma}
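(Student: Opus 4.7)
The plan is to prove $D_{\ell}(K_n\Box K_{n+1})\le 2$ via the Combinatorial Nullstellensatz; the lower bound $D_{\ell}\ge 2$ is automatic from $D(K_n\Box K_{n+1})=2$ (Theorem~\ref{thm:distknkm}). Fix an arbitrary list assignment $L$ with $|L(v_{i,j})|=2$, and, after embedding all colors used into $\mathbb{Q}$, view each $x_{i,j}$ as a variable taking values in $L(v_{i,j})$. The key observation is a sufficient algebraic condition for $c$ to be distinguishing: if the row sums $R_i:=\sum_{j=1}^{n+1} c(v_{i,j})$ are pairwise distinct and the column sums $C_j:=\sum_{i=1}^{n} c(v_{i,j})$ are pairwise distinct, then no nontrivial automorphism $(\sigma,\tau)\in S_n\times S_{n+1}$ can fix $c$, since such an automorphism must preserve every $R_i$ (which is symmetric in the columns) and every $C_j$ (which is symmetric in the rows). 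This motivates the polynomial
\[
F(x)=\prod_{1\le i<i'\le n}\bigl(R_i(x)-R_{i'}(x)\bigr)\cdot \prod_{1\le j<j'\le n+1}\bigl(C_j(x)-C_{j'}(x)\bigr),
\]
of total degree $\binom{n}{2}+\binom{n+1}{2}=n^2$ in the $n(n+1)$ variables $x_{i,j}$, whose nonvanishing on $c$ guarantees that $c$ is distinguishing.

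With lists of size $2$, applying the Combinatorial Nullstellensatz amounts to exhibiting a multilinear monomial $\prod_{(i,j)\in T}x_{i,j}$ of degree $n^2$ whose coefficient in $F$ is nonzero; then one obtains a choice $c\in\prod L(v_{i,j})$ with $F(c)\neq 0$. Since $|T|=n^2$, exactly $n$ of the $n(n+1)$ variables are omitted. Motivated by direct calculation for small cases, my candidate is $T=\{(i,j)\colon 1\le i\le n,\ 1\le j\le n+1,\ j\neq i\}$, i.e., the grid minus the ``partial diagonal'' $\{(1,1),\dots,(n,n)\}$. For $n=1$ one readily checks that the coefficient of $x_{1,2}$ in $F=x_{1,1}-x_{1,2}$ equals $-1$, and for $n=2$ an explicit expansion yields coefficient $\pm 4$ for the monomial $x_{1,2}x_{1,3}x_{2,1}x_{2,3}$, confirming the candidate in both base cases.

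The principal obstacle is establishing that this coefficient is nonzero for every $n\ge 1$. My plan is to use the Leibniz expansions
\[
V_R=\sum_{\sigma\in S_n}\operatorname{sgn}(\sigma)\prod_{i=1}^{n}R_i^{\sigma(i)-1},\qquad V_C=\sum_{\pi\in S_{n+1}}\operatorname{sgn}(\pi)\prod_{j=1}^{n+1}C_j^{\pi(j)-1},
\]
combined with the multinomial expansion of each $R_i^{\sigma(i)-1}=(\sum_j x_{i,j})^{\sigma(i)-1}$ and $C_j^{\pi(j)-1}=(\sum_i x_{i,j})^{\pi(j)-1}$, to isolate the contributions to $\prod_{(i,j)\in T}x_{i,j}$. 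The resulting signed sum should either collapse via a sign-reversing involution on the non-canceling terms or admit an inductive reduction to the $(n-1)$-case; the small-case values $\pm 1,\pm 4,\ldots$ suggest a clean closed form. Once the coefficient is shown nonzero, the Combinatorial Nullstellensatz immediately produces a distinguishing $L$-coloring, completing the proof.
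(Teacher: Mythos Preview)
Your sufficient condition is correct, and your overall framework matches the paper's: build a degree-$n^2$ polynomial whose nonvanishing forces a distinguishing coloring, then apply the Combinatorial Nullstellensatz to the multilinear monomial supported off the partial diagonal $\{(i,i):1\le i\le n\}$. The paper targets exactly the same monomial $\prod_{i\neq j}x_{i,j}$. However, your proposal is not a proof: the crux---that the coefficient of $\prod_{j\neq i}x_{i,j}$ in your $F$ is nonzero---is left as a plan. Your two factors $V_R$ and $V_C$ each involve \emph{all} $n(n+1)$ variables, so there is no clean separation of the monomial between them; the Leibniz/multinomial expansion you sketch produces a signed double sum over $S_n\times S_{n+1}$ together with all multinomial splittings, and you give no mechanism (involution or induction) that actually evaluates it. The small-case checks are encouraging but not a substitute.

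The paper circumvents exactly this difficulty by \emph{breaking the symmetry} in the row factor. It keeps your column polynomial $C=\prod_{j<j'}(C_j-C_{j'})$ but replaces the row-sum Vandermonde by
\[
R=\prod_{1\le h<l\le n}\bigl(x_{l,h}-x_{h,h}\bigr),
\]
whose nonvanishing still distinguishes the rows once the columns are pinned. The payoff is structural: every above-diagonal variable $x_{i,j}$ with $i<j$ appears only in $C$, so the contribution to $\prod_{i\neq j}x_{i,j}$ factors as
\[
F\Bigl[\prod_{i\neq j}x_{i,j}\Bigr]=C\Bigl[\prod_{i<j}x_{i,j}\Bigr]\cdot R\Bigl[\prod_{i>j}x_{i,j}\Bigr].
\]
Each below-diagonal variable $x_{l,h}$ occurs in exactly one factor of $R$, with coefficient $+1$, so $R\bigl[\prod_{i>j}x_{i,j}\bigr]=1$ trivially; and a short column-by-column induction gives $C\bigl[\prod_{i<j}x_{i,j}\bigr]=\prod_{r=1}^n r!$. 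That is the missing idea: choose the row polynomial so that the desired monomial splits cleanly between the two factors. If you wish to salvage your symmetric choice, you must actually carry out the coefficient computation; otherwise, swapping your $V_R$ for the paper's $R$ finishes your argument immediately.
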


\begin{proof}
By Theorem~\ref{thm:distknkm}, we know that $D(K_n\Box K_{n+1})=2$.
Since $D_\ell(G)\ge D(G)$ for all graphs, it follows that $D_\ell(K_n\Box K_{n+1})\ge2$.
Let $L$ be a list assignment to $V(K_n\Box K_{n+1})$ in which every list has two distinct elements, and assume that those elements are from $\RR$.
It remains to show that there is a distinguishing $L$-coloring of $K_n\Box K_{n+1}$.

To apply the Combinatorial Nullstellensatz, we will create a polynomial $F$ of degree $n^2$ in $n(n+1)$ variables such that nonzero valuations of $F$ correspond to distinguishing $L$-colorings of $K_n\Box K_{n+1}$.
The variables of the polynomial are $\{x_{i,j}\ |\ i\in[n], j\in[n+1]\}$, and the value of $x_{i,j}$ will be taken from $L(v_{i,j})$.

To build the polynomial $F$, we first define two families of polynomials whose product is $F$.
The first family is used to differentiate the columns of the graph.
The second family will then be used to differentiate the rows of the graph.

For $1\le i< j\le n+1$, define
\begin{align*}
C_{i,j}&=\sum_{k=1}^n (x_{k,j})-\sum_{k=1}^n(x_{k,i}),
\end{align*}
and set $C=\prod_{1\le i<j\le n+1}C_{i,j}$.
For $1\le h<l\le n$, define
\begin{align*}
R_{h,l}&= x_{l,h}-x_{h,h}
\end{align*}
and set $R=\prod_{1\le h<l\le n}R_{h,l}$.
Finally, define
$$F=C\cdot R.$$

We claim that a nonzero valuation of $F$ corresponds to a distinguishing coloring of $K_n\Box K_{n+1}$.
In such a valuation, $C_{i,j}\neq0$ for all $1\le i<j\le n+1$, and $R_{h,l}\neq 0$ for $1\le h<l\le n$.
If $C_{i,j}\neq 0$, then the multiset of the colors used in columns $i$ and $j$ are distinct, and hence the columns are uniquely identified by their multiset of colors.
If $R_{h,l}\neq 0$, then rows $h$ and $l$ differ in column $h$.
Working though these polynomials in order, $R_{1,2},\ldots,R_{1,n}$ use column 1 to distinguish row 1 from all other rows.
After row $h$ has been distinguished from all rows with lower indices, the polynomials $R_{h,l}$ for $l>h$ use column $h$ to distinguish row $h$ from all rows with higher indices.
Thus when $F\neq 0$ each row and column of $K_n\Box K_{n+1}$ is identifiable by its coloring, and hence the coloring is distinguishing.

Now we apply the Combinatorial Nullstellensatz to prove that $F$ has a nonzero valuation.
We have that
$$\textrm{deg}(F)=\textrm{deg}(C)+\textrm{deg}(R) = \binom{n+1}{2}+\binom{n}{2}=n^2.$$
We show that the monomial $\prod_{i\neq j}x_{i,j}$ has a nonzero coefficient in $F$.
Split the variables of the monomial into two sets: $x_{i,j}$ is {\it above the diagonal} if $i<j$, and $x_{i,j}$ is {\it below the diagonal} if $i>j$.
There are $\frac{(n+1)(n)}{2}$ variables that are above the diagonal, and these variables only occur in the polynomials $C_{i,j}$.
Since there are $\frac{(n+1)(n)}{2}$ polynomials $C_{i,j}$, it follows $\prod C_{i,j}$ must contribute the term $\prod_{i<j} x_{i,j}$ to the term $ \prod_{i\neq j}x_{i,j}$ in $F$.
It follows that $\prod R_{i,j}$ must contribute the term $\prod_{i>j} x_{i,j}$ to the term $ \prod_{i\neq j}x_{i,j}$ in $F$.
Thus
$$F\left[\prod_{i\neq j}x_{i,j}\right] = C\left[\prod_{i< j}x_{i,j}\right]\cdot R\left[\prod_{i> j}x_{i,j}\right].$$

We argue by induction on the width of our matrix that
$$\left(\prod_{1\le i<j\le n+1}C_{i,j}\right)\left[\prod_{1\le i< j\le n+1}x_{i,j}\right]=\prod_{r=1}^nr!.$$
Our induction argument will go from $k=1$ to $k=n$.
If $k=1$, then $\prod_{1\le i<j\le k+1}C_{i,j}=(x_{1,2}+\ldots+x_{n,2})-(x_{1,1}+\ldots+x_{n,1})$, and
$$\left(\prod_{1\le i<j\le 2}C_{i,j}\right)\left[\prod_{1\le i< j\le 2}x_{i,j}\right]=1=\prod_{r=1}^1r!.$$
For $2\le k \le n$, assume that
$$\left(\prod_{1\le i<j\le k}C_{i,j}\right)\left[\prod_{1\le i< j\le k}x_{i,j}\right]=\prod_{r=1}^{k-1}r!.$$
There are $k$ variables above the diagonal in column $k+1$, and they only appear in the $k$ polynomials of the form $C_{l,k+1}$ for $l\in[k]$.
Furthermore, they each appear once in each of those polynomials, and always with the coefficient $+1$.
Thus $\left(\prod_{l=1}^{k}C_{l,k+1}\right)\left[\prod_{i=1}^{k} x_{i,k+1}\right]=k!$.
Therefore,
\begin{align*}
\left(\prod_{1\le i<j\le k+1}C_{i,j}\right)\left[\prod_{1\le i< j\le k+1}x_{i,j}\right]&=\left(\prod_{1\le i<j\le k}C_{i,j}\right)\left[\prod_{1\le i< j\le k}x_{i,j}\right]\cdot \left(\prod_{l=1}^{k}C_{l,k+1}\right)\left[\prod_{i=1}^{k} x_{i,k+1}\right]\\
&=\left(\prod_{r=1}^{k-1}r!\right)\cdot k!\\
&=\prod_{r=1}^{k}r!.
\end{align*}
Thus, when $k=n$, we have that
$$\left(\prod_{1\le i<j\le n+1}C_{i,j}\right)\left[\prod_{1\le i< j\le n+1}x_{i,j}\right]=\prod_{r=1}^nr!.$$

It remains to determine $R\left[\prod_{i> j}x_{i,j}\right]$.
Each variable below the diagonal appears in exactly one polynomial $R_{h,l}$, and they all have a coefficient of $+1$.
Since they appear in distinct polynomials, it follows that
$$R\left[\prod_{i> j}x_{i,j}\right]=1.$$

We conclude that
$$F\left[ \prod_{i\neq j}x_{i,j} \right] = C\left[\prod_{i< j}x_{i,j}\right]\cdot R\left[\prod_{i> j}x_{i,j}\right]= \prod_{r=1}^n r!.$$
Since the coefficient of $\prod_{i\neq j}x_{i,j}$ in $F$ is nonzero, each variable appears with power $1$ or $0$ in the monomial, and each list has size $2$, we conclude from the Combinatorial Nullstellensatz that there is a distinguishing $L$-coloring of $K_n\Box K_{n+1}$.
\end{proof}

A closer analysis of Lemma~\ref{lem:nn+1} actually shows that any precoloring of a set of elements of $K_n\Box K_{n+1}$ that contains at most one element of each row and column can be extended to a distinguishing $L$-coloring.
We refer the interested reader to~\cite{FGHSW2} for further results about precoloring extensions in the context of graph distinguishing.

Given Lemma~\ref{lem:nn+1}, when $n$ is large enough we are able to produce distinguishing colorings of $K_n\Box K_m$ when $m$ is exponential in $n$, albeit with a base smaller than $2$.
We first include a small technical lemma that works for all values of $n$.

\begin{lemma}\label{lem:bigcoeff}
Let $S(x_{1,1},x_{1,2},x_{2,1},x_{2,2},\dots, x_{n,1},x_{n,2})=\prod_{i=1}^n(x_{i,1}+x_{i,2})$.  For any assignment of the $x_{i,j}$ to formal variables from $\{c_1,\ldots,c_r\}$ such that $x_{i,1}\ne x_{i,2}$ for all $i\in[n]$, the coefficient of each monomial $\prod_{i=1}^rc_i^{\alpha_i}$ in $S$ is at most ${{n}\choose{\lceil\frac{n}{2}\rceil}}$.
\end{lemma}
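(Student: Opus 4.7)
My plan is to reinterpret the coefficient as a fiber size of a signed linear map $\{-1,+1\}^n \to \mathbb{R}^r$ and then apply a Littlewood--Offord style anticoncentration bound.

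First I would expand
\[ S \;=\; \sum_{T \subseteq [n]} \prod_{i \in T} x_{i,1} \prod_{i \notin T} x_{i,2}. \]
After substituting the assignment, each summand becomes a monomial $\prod_{j=1}^{r} c_j^{m_j(T)}$, where $m_j(T)$ counts the indices $i \in T$ with $x_{i,1}=c_j$ plus the indices $i \notin T$ with $x_{i,2}=c_j$. Thus the coefficient of $\prod_{j} c_j^{\alpha_j}$ equals the number of subsets $T \subseteq [n]$ satisfying $m_j(T) = \alpha_j$ simultaneously for every $j$.

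Next I would encode $T$ by a sign vector $\epsilon \in \{-1,+1\}^n$ with $\epsilon_i = +1$ for $i \notin T$ and $\epsilon_i = -1$ for $i \in T$. A short direct computation converts the $r$ simultaneous counting conditions into a single vector equation $\sum_{i=1}^{n} \epsilon_i v_i = \gamma$ in $\mathbb{R}^r$, where $v_i$ has a $+1$ in the coordinate indexed by the color $x_{i,1}$, a $-1$ in the coordinate indexed by $x_{i,2}$, and zeros elsewhere, and where $\gamma$ is an explicit vector determined by $\alpha$ and the assignment. This is the one place where the hypothesis enters: because $x_{i,1} \ne x_{i,2}$, each $v_i$ has two distinct nonzero entries and is in particular a nonzero vector.

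To finish, pick any $h \in \mathbb{R}^r$ in sufficiently general position that $h \cdot v_i \ne 0$ for every $i$; this only requires avoiding the union of $n$ hyperplanes. Dotting both sides of $\sum_{i} \epsilon_i v_i = \gamma$ with $h$ collapses the vector equation to the one-dimensional equation $\sum_{i} \epsilon_i (h \cdot v_i) = h \cdot \gamma$ whose $n$ coefficients $h \cdot v_i$ are all nonzero reals. The Erd\H{o}s strengthening of the Littlewood--Offord theorem then bounds the number of sign vectors $\epsilon \in \{-1,+1\}^n$ satisfying such an equation by $\binom{n}{\lfloor n/2 \rfloor} = \binom{n}{\lceil n/2 \rceil}$, which is precisely the desired bound.

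The principal obstacle is recognizing the coefficient as an anticoncentration count. Once that reduction is set up, the hypothesis $x_{i,1} \ne x_{i,2}$ plugs directly into the nonvanishing requirement of Littlewood--Offord, and nothing further about the assignment is used. A naive ``peel off one row'' induction on $n$ would give only $2\binom{n-1}{\lceil (n-1)/2 \rceil}$, which strictly exceeds $\binom{n}{\lceil n/2 \rceil}$ when $n$ is odd, so a nontrivial ingredient of Sperner--Littlewood--Offord flavor really does seem required.
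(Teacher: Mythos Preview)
Your argument is correct and takes a genuinely different route from the paper's own proof. The paper argues by induction on $n$: after disposing of the case where all factors share the same pair of colors, it fixes a color $c_1$ occurring in exactly $\beta\in[n-1]$ of the factors, splits $S=S_1S_2$ with $S_1=\prod_{i\le\beta}(c_1+x_{i,2})$, and bounds the target coefficient by $\binom{\beta}{\alpha_1}\binom{n-\beta}{\lceil(n-\beta)/2\rceil}\le\binom{n}{\lceil n/2\rceil}$, applying the induction hypothesis to $S_2$ and a one-term Vandermonde estimate for the product. Your approach instead interprets the coefficient as the size of a fiber of $\epsilon\mapsto\sum_i\epsilon_i v_i$, projects generically to one dimension, and quotes Erd\H{o}s's sharpening of Littlewood--Offord. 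Your route is more conceptual and makes the hypothesis $x_{i,1}\ne x_{i,2}$ do exactly one job (forcing $v_i\ne 0$, hence $h\cdot v_i\ne 0$), and it suggests natural strengthenings via Kleitman's higher-dimensional version or Hal\'asz-type bounds that could feed into the $k\ge 3$ analogue. The paper's argument, on the other hand, is entirely self-contained and avoids importing an external anticoncentration theorem; its ``isolate one color and peel off its factors'' mechanism is also closer in spirit to the counting used later for larger $k$. Your remark that a naive one-row induction overshoots for odd $n$ is accurate and explains why the paper peels off a whole color class rather than a single factor.
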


\begin{proof}
We apply induction on $n$ to prove the assertion.
When $n=1$, every term has coefficient $1$, as desired.

If $\{x_{i,1},x_{i,2}\}=\{x_{j,1},x_{j,2}\}$ for all $i$ and $j$, then $S(c_1,\ldots,c_r)=\sum_{k=1}^n\binom{n}{k}x_{1,1}^kx_{1,2}^{n-k}$, and the result holds.
Therefore we assume there are values $i$ and $j$ such that $\{x_{i,1}, x_{i,2}\}\neq \{x_{j,1},x_{j,2}\}$.
Assume without loss of generality that $c_1$ occurs exactly in the first $\beta$ terms, where $\beta\in[n-1]$.
Write $S=S_1S_2$, where $S_1(x_{1,2},\ldots,x_{{\beta},2})=\prod_{i=1}^{\beta}(c_1+{x_{i,2}})$, $S_2(x_{\beta+1,1}, x_{\beta+1,2},\ldots ,x_{n,1},x_{n,2})=\prod_{j=\beta+1}^{n}(x_{j,1}+x_{j,2})$, and $x_{i,2},x_{j,1},x_{j,2}\in \{c_2,\ldots,c_r\}$.

In order to create $\prod_{i=1}^rc_i^{\alpha_i}$ in $S$, we must choose $c_1$ from $\alpha_1$ terms in $S_1$.
There are ${\beta}\choose{\alpha_1}$ ways we can create $c_1^{\alpha_1}$ in $S_1$.
Fix one term in $S_1$ in which the power of $c_1$ is $\alpha_1$.
Suppose that this term is of the form $c_1^{\alpha_1}\prod_{i=2}^rc_i^{\gamma_i}$.
This term can be used to create $\prod_{i=1}^rc_i^{\alpha_i}$ in $S$ only if $\prod_{i=2}^rc_i^{\alpha_i-\gamma_i}$ has a nonzero coefficient in $S_2$.
By the induction hypothesis, $\prod_{i=2}^rc_i^{\alpha_i-\gamma_i}$ has coefficient at most ${n-\beta}\choose{\CL{\frac{n-\beta}{2}}}$ in $S_2$.
Therefore $\prod_{i=1}^rc_i^{\alpha_i}$ has coefficient at most ${{\beta}\choose{\alpha_1}}{{n-\beta}\choose{\CL{\frac{n-\beta}{2}}}}$ which is at most $\binom{n}{\CL{\frac{n}{2}}}$, as desired.
\end{proof}

\begin{lemma}\label{lem:cubic}
For $n\ge 2222$ and $n<m\le \CL{1.09^n}+ n+1$,
$$D_{\ell}(K_{n}\Box K_{m})\le 2.$$
\end{lemma}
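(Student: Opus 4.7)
The plan is to apply Lemma~\ref{lem:nn+1} to the first $n+1$ columns and then greedily extend the coloring one column at a time.

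Specifically, I first apply Lemma~\ref{lem:nn+1} to the sub-list-assignment on the first $n+1$ columns, obtaining a distinguishing $L$-coloring of the induced $K_n\Box K_{n+1}$. By inspection of the proof of Lemma~\ref{lem:nn+1}, this partial coloring has two useful structural properties: (i) the $n+1$ column multisets are pairwise distinct, and (ii) the $n$ rows, viewed as length-$(n+1)$ vectors, are pairwise distinct.

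Next, for each $j$ from $n+2$ to $m$, I select a coloring of column $j$ from the size-two lists in such a way that all $j$ column multisets remain pairwise distinct. Property (ii) is automatically inherited by the full coloring, and the distinct column multisets then force any column permutation in an automorphism to be trivial, after which the distinct rows force the row permutation to be trivial; hence the full coloring is distinguishing. At step $j$, a coloring of column $j$ is \emph{bad} if its multiset coincides with one of the $j-1$ multisets already in use. By Lemma~\ref{lem:bigcoeff} applied to the single-column polynomial $\prod_{i=1}^n (x_{i,1}+x_{i,2})$, each specific multiset is realized by at most $\binom{n}{\lceil n/2\rceil}$ of the $2^n$ possible colorings of column $j$, so at most $(j-1)\binom{n}{\lceil n/2\rceil}$ colorings are bad.

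The main obstacle is verifying that at least one good coloring is available at each step, i.e., that $2^n > (j-1)\binom{n}{\lceil n/2\rceil}$ throughout $j \le \lceil 1.09^n\rceil + n + 1$. Since Stirling gives $\binom{n}{\lceil n/2\rceil}\sim 2^n/\sqrt{n\pi/2}$, the naive reading of this inequality permits only $j-1 = O(\sqrt{n})$ additional columns---far short of the claimed exponential bound. To bridge this gap, I expect that one must refine the extension step: rather than forbidding every multiset coincidence, one should permit controlled coincidences and exploit property (ii) (sharpened by the remark following Lemma~\ref{lem:nn+1} on precoloring extensions) to rule out the resulting candidate automorphisms. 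Cutting the effective bad-coloring count by a factor exponential in $n$ in this way is precisely where the specific constant $1.09$ and the threshold $n\ge 2222$ should enter, and is the main technical content of the proof.
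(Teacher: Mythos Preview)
You have correctly located the obstacle, but your proposed remedy points in the wrong direction. The fix is not to allow multiset coincidences and then argue them away via property~(ii); rather, it is to change \emph{which} columns form the anchor set and to change \emph{what} is required of the remaining columns.

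The paper does not take the first $n+1$ columns as the anchor $A$. Instead it lets $A$ contain every \emph{list-uniform} column (one in which all $n$ lists coincide), padded arbitrarily up to size $n+1$ if necessary. The point is that every column $C_j$ outside $A$ then has $\bigl|\bigcup_i L(v_{i,j})\bigr|\ge 3$. For such a column the paper only asks that (a) its colour \emph{pattern} (multiset) differ from each of the $n+1$ patterns in $A$, and (b) its colour \emph{vector} differ from the vectors already chosen for earlier columns outside $A$. Condition (b) costs only one coloring per previous column, so the real work is (a): one must show that a non-list-uniform column has at least $\lceil 1.09^n\rceil$ colorings whose multiset avoids $n+1$ prescribed multisets. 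This is exactly where the third colour is used. A two-case analysis (either two colours each appear in nearly all lists, or not) produces, for $n\ge 2222$, at least $2^{n/8}(1-o(1))\ge \lceil 1.09^n\rceil$ acceptable colorings; Lemma~\ref{lem:bigcoeff} enters only as a tool inside this analysis, not as the global bound you used. Your scheme of taking $A$ arbitrarily fails outright in the extreme case where every column is list-uniform with the same two-element list: then there are only $n+1$ multisets in total, and no amount of ``controlled coincidence'' bookkeeping will manufacture $1.09^n$ distinguishable columns. The paper handles that case separately (all columns lie in $A$, and one borrows a non-list distinguishing $2$-coloring of $K_n\Box K_{|A|}$ from Theorem~\ref{thm:distknkm}).
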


\begin{proof}
Let $G=K_n\Box K_m$.
Let $L$ be a list assignment on $V(G)$ in which all lists have size $2$.
The proof proceeds in two steps.
In the first step, we select a set of columns which we will color so that no nontrivial row transposition of $G$ is color-preserving.
In the second step we extend this coloring to all of the other columns so that no nontrivial column transposition is color-preserving.

We say that a column is {\it list-uniform} if every vertex in the column has the same list of colors.
Let $A'$ be the set of list-uniform columns.
If $|A'|\ge n+1$, set $A=A'$.
If $|A'|<n+1$, let $A$ consist of $A'$ with $n+1-|A'|$ additional columns, chosen arbitrarily.
Let $V(A)$ denote the set of vertices contained in the columns in $A$.

Define the {\it color pattern} of a colored column to be the multiset of the colors used to color the vertices of the column.
Define the {\it color vector} of a colored column to be the vector of the colors used to color the vertices of the column, $\langle c(v_{1,j}),c(v_{2,j}),\ldots,c(v_{n,j})\rangle$.
We will color the columns of $G-A$ so that (1) no column in $G-A$ shares its color pattern with a column in $A$, and (2) no two vectors in $G-A$ have the same color vector.

If $|A'|\ge n+1$, then $A=A'$.
Fix a distinguishing $2$-coloring $c$ of $K_{n}\Box K_{|A|}$ with no monochromatic columns, and let the colors be $0$ and $1$.
Theorem~\ref{thm:distknkm} implies that such a coloring exists because $\CL{1.09^n}+n+1\le 2^n\CL{\log_2 n}-3$.
Assume that the colors of the graph have been ordered.
Color $G[V(A)]$ using $c$ so that in column $j$, the vertices colored $0$ are assigned the minimum color in the list of column $j$ and the vertices colored $1$ are assigned the maximum color in the list of column $j$.
This is a distinguishing coloring of $G[V(A)]$.
Now, all remaining columns contain at least three colors.
Greedily color these columns so that (1) there are at least three different colors on their vertices and (2) so that their color vectors are all different from each other.
It follows that there are at least $2^{n-3}$ colorings of each column, and because $n\ge 4$, we have that $2^{n-3}\ge \CL{1.09^n}$.
The resulting coloring of $G$ is distinguishing because the coloring of each column in $A$ uses two colors and distinguishes the rows of $G$, while the remaining columns are all distinguished because they have distinct color vectors with at least three colors.

For the rest of the proof, assume that $|A'|<n+1$.
In this case, we first obtain a distinguishing coloring of $G[V(A)]$ by using Lemma~\ref{lem:nn+1}.
For $C_j\notin A$, let $L(C_j)=\bigcup_{i=1}^nL(v_{i,j})$, and note that $|L(C_j)|\ge 3$.
Therefore there is a color $c_i$ in this union that appears in at most $2n/3$ lists in the column.
We will show that each column in $G-A$ has at least $1.09^n$ possible color vectors that do not share their color pattern with any column of $A$.
We will proceed with two cases.

{\bf Case 1:} Two colors appear in at least $\frac 78 n-n^{2/3}$ terms.
It follows that there are at least $\frac 34 n-2n^{2/3}$ lists that have the same list of two colors.
Let $L(C_j)=\{c_1,\ldots,c_\ell\}$ and let $\{c_1,c_2\}$ be the list on at least $\frac 34 n-2n^{2/3}$ vertices.

Since the column is not list-uniform, there are at least two distinct color patterns on $C_j$ for the colors $\{c_3,\ldots,c_\ell\}$.
Thus we may color the vertices in $C_j$ whose lists are not $\{c_1,c_2\}$ so that at most $(n+1)/2$ columns in $A$ match that color pattern on $\{c_3,\ldots,c_\ell\}$.

The extensions of the partial coloring of $C_j$ have at least $\frac 34 n-2n^{2/3}+1$ distinct color patterns, depending on the number of vertices with list $\{c_1,c_2\}$ that receive color $c_1$.
Therefore, there is an $\alpha\in\left\{\FL{\frac n8-n^{2/3}}-1,\FL{\frac n8-n^{2/3}},\ldots,\CL{\frac{5n}{8}-n^{2/3}}+1\right\}$ such that none of the patterns comes from a column in $A$ with at least $\alpha$ vertices colored $c_1$.
Therefore, for $n\ge 2222$, it follows that the number of extensions of the colorings to $C_j$ that do not match the color pattern of any column in $A$ is bounded by 
$$\binom{3n/4-2n^{2/3}}{\FL{n/8-n^{2/3}}-1}\ge \left(\frac{3n/4-2n^{2/3}}{n/8-n^{2/3}-1}\right)^{n/8-n^{2/3}}\ge 6^{n/8-n^{2/3}}\ge 2^{n/8}\ge \CL{1.09^{n}}.$$
This finishes Case 1.

{\bf Case 2:} No two colors appear in at least $\frac 78 n-n^{2/3}$ terms.
Thus there is at most one color that is used in at least $\frac 78 n-n^{2/3}$ terms.
Let $L(C_j)=\{c_1,\ldots,c_\ell\}$ such that
\begin{enumerate}
\item if a color appears at least $\frac 78 n-n^{2/3}$ terms, then it is $c_\ell$;
\item colors $\{c_1,\ldots,c_{\ell-1}\}$ are ordered so that the function $k(h)$, which denotes the number of vertices in $C_j$ whose list contains $c_h$ but does not contain $c_g$ for any $g<h$, is decreasing.
\end{enumerate}
Thus there is a set $D=\{c_1,\ldots,c_{\ell'}\}$ such that
$$n^{2/3}< \sum_{i=1}^{\ell'} k(i) \le \frac 78n.$$

We will color the vertices in $C_j$ with color $\{c_1,\ldots,c_h\}$ in their lists so that after the coloring there are at most
$$\frac{n+1}{\prod_{i=1}^h(k(i)+1)}$$
columns in $A$ whose color pattern in colors $\{c_1,\ldots,c_h\}$ matches the color pattern of $C_j$ in those colors.
This statement is trivially true before any vertices in $C_j$ have been colored.
Prior to coloring the vertices with color $c_h$ in their lists, there are at most
$$\frac{n+1}{\prod_{i=1}^{h-1}(k(i)+1)}$$
columns in $A$ whose color pattern matches the color pattern of $C_j$ on the first $h-1$ colors.
When processing color $c_h$, there are $k(h)$ vertices with the color $c_h$ in their lists that have not been colored (note that some vertices with $c_h$ in their list may have been colored when earlier colors were processed).
Therefore there are $k(h)+1$ possibilities for the number of vertices with color $c_h$ in $C_j$.
By the pigeonhole principle, it follows that there is a choice for the number of vertices that will be colored $c_h$ so that there are only
$$\frac{n+1}{\prod_{i=1}^{h}(k(i)+1)}$$
columns in $A$ whose color pattern matches the color pattern of $C_j$ on the first $h$ colors.
After processing all of the colors in $D$, the number of columns in $A$ whose color pattern matches the color pattern of $C_j$ on the colors in $D$ is bounded by 
$$\frac{n+1}{\prod_{i=1}^{\ell'}(k(i)+1)} < \frac{n+1}{n^{2/3}}<n^{1/3}.$$

For simplicity, let $k=\sum_{i=1}^{\ell'}k(i)$.
At this point, there are $n-k$ uncolored vertices in $C_j$.
Therefore, there are $2^{n-k}$ extensions of the coloring to all of $C_j$.
By Lemma~\ref{lem:bigcoeff}, each column in $A$ whose color pattern matches the color pattern of $C_j$ on the colors in $D$ can match the color pattern of at most $\binom{n-k}{\CL{(n-k)/2}}$ of the colorings of $C_j$.
When $n-k$ is even, we will use the approximation $\binom{n-k}{(n-k)/2}\le \frac{2^{n-k}}{\sqrt{ \frac32(n-k)+1}}$.
When $n-k$ is odd, this approximation gives us $\binom{n-k}{\CL{(n-k)/2}}\le \frac{2^{n-k}}{\sqrt{ \frac32(n-k)-\frac 12}} \frac{n-k}{n-k+1}$.
Thus $\binom{n-k}{\CL{(n-k)/2}}\le \frac{2^{n-k}}{\sqrt{ \frac32(n-k)+1}}$ is true for all values of $n-k$.
Therefore, because $n\ge 2194$, the number of colorings of $C_j$ whose color pattern matches the color pattern of no column in $A$ is bounded by
\begin{align*}
2^{n-k}-n^{1/3}\binom{n-k}{\CL{(n-k)/2}}&\ge 2^{n-k}-n^{1/3}\frac{2^{n-k}}{\sqrt{ \frac32(n-k)+1}}\\
& \ge 2^{n/8}\left(1-\frac{n^{1/3}}{\sqrt{ \frac3{16} n+1}}\right)\\
&\ge \CL{1.09^n}.
\end{align*}
This finishes Case 2.

In both cases, each column in $G-A$ has at least $\CL{1.09^n}$ colorings that do not match the color pattern of any column in $A$.
Greedily choose such colorings for the columns of $G-A$ so that no two of these columns have the same color vector.

We claim that the coloring we have generated is a distinguishing coloring of $G$.
By construction, the columns in $A$ are distinguishable from the columns not in $A$ since the color patterns of the columns in $A$ are not repeated outside of $A$.
Once the columns of $A$ have been identified, the coloring of $A$ is distinguishing on the induced subgraph of $A$.
Therefore the coloring of the columns of $A$ uniquely identifies the vertices in $A$ and hence distinguished the rows of $G$.
Finally, once the rows have been distinguished, each column of $G-A$ is distinguishable, since they all have distinct color vectors.
Thus the coloring is a distinguishing coloring.
\end{proof}

We now proceed to the proof of Theorem~\ref{thm:listdistknkm}.
We first give two lemmas that provide a bound on the coefficients that we will see in the generating functions that allow us to color our graph.

 Although the following inequality seems to be quite natural, several experts in probability theory were not aware of it, and a classical book of inequalities~\cite{BB} did not include it.  Hence, it may be of independent interest.  

\begin{lemma}\label{lem:binomdistbound}
If $a$ and $n$ are positive integers with $a<n$ and $0<p<1$ is a real number, then $${n\choose a}p^a(1-p)^{n-a}<\frac{C}{\sqrt{np(1-p)}},$$ where $C=\left(\frac{3}{2e}\right)^{\frac{3}{2}}.$
\end{lemma}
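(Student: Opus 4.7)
The key observation motivating the approach is that $C = (3/(2e))^{3/2}$ equals $\max_{x > 0} x^{3/2}e^{-x}$, attained at $x = 3/2$. This suggests the extremal case of the inequality is $a = 1$ (or by symmetry $a = n-1$) with $np$ close to $3/2$, and the plan is to reduce to this regime via optimization over $p$ followed by a case split.

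For fixed $n > a \geq 1$, one maximizes $f(p) := \binom{n}{a}p^a(1-p)^{n-a}\sqrt{np(1-p)}$ over $p \in (0,1)$. A direct calculus computation gives the maximizer $p^* = (2a+1)/(2n+2)$, so it suffices to show
$$M(n, a) := \binom{n}{a}\sqrt{n}\,(p^*)^{a+1/2}(1-p^*)^{n-a+1/2} < C$$
for all positive integers $n > a \geq 1$. The symmetry $M(n, a) = M(n, n-a)$, obtained by substituting $p \mapsto 1 - p$, allows us to assume $1 \leq a \leq n/2$.

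In the boundary case $a = 1$, $\binom{n}{1} = n$ gives $M(n, 1) = (np^*)^{3/2}(1-p^*)^{n-1/2}$ with $p^* = 3/(2n+2)$. Factoring $(3/2)^{3/2}$ and taking logarithms, the inequality reduces to
$(3/2)\ln(1 - 1/(n+1)) + (n-1/2)\ln(1 - 3/(2n+2)) < -3/2$.
Setting $u = 1/(n+1) \in (0, 1/3]$ for $n \geq 2$ and expanding both logarithms in Taylor series, the leading $-3/2$ contribution cancels exactly and the residual takes the form $-3u/8 - 3u^2/16 - 5u^3/64 + \cdots$, whose first few coefficients are strictly negative. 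A crude geometric bound on the tail (using $u \leq 1/3$) then shows the sum remains negative for all $n \geq 2$.

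In the bulk case $2 \leq a \leq n/2$ (so $n \geq 4$), apply Stirling's formula in the Robbins form $k! = \sqrt{2\pi k}(k/e)^k e^{r_k}$ with $r_k \in [1/(12k+1), 1/(12k)]$. With $q = a/n$ and $D(q\|p) = q\ln(q/p) + (1-q)\ln((1-q)/(1-p))$ the Kullback--Leibler divergence, one obtains
$$M(n, a) \leq \frac{1}{\sqrt{2\pi}}\sqrt{\frac{p^*(1-p^*)}{q(1-q)}}\,e^{-nD(q\|p^*) + r_n - r_a - r_{n-a}}.$$
Since $a, n-a \geq 2$, the AM--HM inequality gives $r_a + r_{n-a} \geq 2/(6n+1)$, so the Stirling correction is at most $e^{1/(12n) - 2/(6n+1)} < 1$. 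Combined with $p^* - q = (1-2q)/(2(n+1)) = O(1/n)$ and a second-order Taylor expansion of $D$ about $q = p^*$, one obtains $\sqrt{p^*(1-p^*)/q(1-q)}\,e^{-nD(q\|p^*)} = 1 + O(1/n)$, whence $M(n, a) < C$. The main obstacle is the boundary case: since $M(n, 1) \to C$ asymptotically, the Taylor expansion must be carried out precisely enough to establish strict inequality for every finite $n \geq 2$, with no Stirling slack available since $\binom{n}{1} = n$ holds exactly.
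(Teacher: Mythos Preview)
Your initial reduction---maximizing over $p$ to land at $p^* = (a+\tfrac12)/(n+1)$ and using the symmetry $M(n,a)=M(n,n-a)$---matches the paper exactly, and your treatment of $a=1$ via Taylor expansion is a workable (if more laborious) substitute for the paper's argument. The paper instead shows that $y(n)=\log M(n,1)$ is concave with finite limit $\log C$, hence strictly increasing, which is cleaner and avoids any tail estimates.

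The real gap is in your bulk case $2\le a\le n/2$. Your claim that
\[
\sqrt{\frac{p^*(1-p^*)}{q(1-q)}}\,e^{-nD(q\|p^*)} \;=\; 1+O(1/n)
\]
is \emph{not} true uniformly in $a$. It holds only when $q=a/n$ stays bounded away from $0$ and $1$. For $a$ fixed (say $a=2$) and $n\to\infty$, one has $q\sim 2/n$, $p^*\sim 5/(2n)$, so $p^*(1-p^*)/(q(1-q))\to 5/4$ and $nD(q\|p^*)\to \tfrac12 + 2\ln(4/5)\approx 0.054$; the product tends to roughly $1.06$, not $1$. More generally, when $a=O(1)$ both the variance ratio and $nD(q\|p^*)$ are $\Theta(1)$, so no $1+O(1/n)$ conclusion follows from a second-order expansion. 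Since $1/\sqrt{2\pi}$ is only about $2.7\%$ below $C$, even after the Stirling correction $e^{r_n-r_a-r_{n-a}}<1$ you are left with margins on the order of $1\%$ at $a=2$, and your argument as written does not control them. You would need a separate, quantitatively careful treatment of every small $a$, which essentially reintroduces the case analysis you were trying to avoid.

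The paper sidesteps this entirely with a monotonicity argument: it proves $M(n,a-1)/M(n,a)>1$ for $2\le a\le n/2$ by writing the ratio as $X(a)/X(n-a+1)$ for a single sequence $X$, then showing $\log X(a+\tfrac12)$ is convex with a finite limit and hence strictly decreasing. This reduces everything to $a=1$ without any Stirling or KL estimates. If you want to repair your approach, the cleanest fix is to adopt this monotonicity step and keep your Taylor-series handling of $a=1$; alternatively, you would have to make the Stirling/KL bound fully explicit and uniform in $a\ge 2$, which is possible but delicate given how close $1/\sqrt{2\pi}$ is to $C$.
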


While the proof of Lemma \ref{lem:binomdistbound} relies only on elementary calculus, it is somewhat long and we feel that a full proof would detract from the proof of Theorem~\ref{thm:listdistknkm}, so have elected to present it in full in Appendix~\ref{sec:App}.

\begin{lemma}\label{lem:bigcoeffk}
Let $k$ be an integer that is at least three and let
$$S(x_{1,1},\ldots,x_{1,k},x_{2,1},\ldots,x_{2,k},\ldots, x_{n,1},\ldots,x_{n,k})=\prod_{i=1}^n(x_{i,1}+\ldots+x_{i,k}).$$
For any assignment of the $x_{i,j}$ to formal variables from $\{c_1,\ldots,c_r\}$ such that $x_{i,j}$ are all distinct for all $j\in k$, the coefficients of each monomial $\prod_{i=1}^rc_i^{\alpha_i}$ in $S$ are all at most $\frac{Ck^{n+1}}{\sqrt[4]{n}}$, where $C=\left(\frac{3}{2e}\right)^{\frac{3}{2}}$.
\end{lemma}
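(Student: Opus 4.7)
The plan is to reformulate the coefficient probabilistically and then apply Lemma~\ref{lem:binomdistbound} to a carefully extracted Binomial component after aggregating the colors. For each row $i$, let $X_i$ be uniformly distributed on the $k$ distinct colors appearing in row $i$, independently across rows. If $Y_s=|\{i:X_i=c_s\}|$, then the coefficient of $\prod_{s}c_s^{\alpha_s}$ in $S$ is exactly $k^n\,\Pr[Y=\alpha]$. It therefore suffices to show that $\Pr[Y=\alpha]\le 2C\sqrt{k/n}$; combined with the elementary inequality $2\sqrt{k/n}\le k/n^{1/4}$ (equivalent to $\sqrt{k}\,n^{1/4}\ge 2$, which holds for $k\ge 3$ and $n\ge 2$), this yields the target bound $Ck^{n+1}/n^{1/4}$. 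The case $n=1$ is trivial since every coefficient is then at most $1$.

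To bound $\Pr[Y=\alpha]$, I would aggregate colors. For any subset $T\subseteq\{c_1,\dots,c_r\}$ the indicators $Z_i=\mathbf{1}[X_i\in T]$ are independent Bernoullis with parameter $\tau_i/k$, where $\tau_i=|T\cap\text{row}_i|$, so $\Pr[Y=\alpha]\le\Pr[\sum_iZ_i=\sum_{s\in T}\alpha_s]$. I want to choose $T$ so that $\sum_iZ_i$ contains a large homogeneous Binomial piece. Selecting each color independently in $T$ with probability $1/2$ gives $\tau_i\sim\mathrm{Bin}(k,1/2)$ and hence $\mathbb{E}[\tau_i(k-\tau_i)]=k(k-1)/4$; averaging over $T$ produces a specific $T^*$ with $\sum_i\tau_i(k-\tau_i)\ge nk(k-1)/4$.

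Writing $N_t=|\{i:\tau_i=t\}|$, pigeonhole applied to $\sum_{t=1}^{k-1}N_t\,t(k-t)\ge nk(k-1)/4$ yields some $t^*\in\{1,\dots,k-1\}$ with $N_{t^*}\,t^*(k-t^*)\ge nk/4$. Now split $\sum_iZ_i=W_1+W_2$, where $W_1\sim\mathrm{Bin}(N_{t^*},t^*/k)$ collects the rows with $\tau_i=t^*$ and $W_2$ is independent of $W_1$. Using the standard fact that convolving with an independent random variable cannot increase the maximum point-mass, one obtains
\[
\Pr\!\left[\sum_iZ_i=a\right]\;\le\;\max_b\Pr[W_1=b]\;\le\;\frac{Ck}{\sqrt{N_{t^*}\,t^*(k-t^*)}}\;\le\;\frac{2C\sqrt{k}}{\sqrt{n}},
\]
where the middle inequality is Lemma~\ref{lem:binomdistbound} applied to $W_1$.

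I expect the main obstacle to be the averaging/pigeonhole step that produces $T^*$ and $t^*$, together with verifying that Lemma~\ref{lem:binomdistbound} genuinely applies: the lemma requires the mode of $W_1$ to lie in the interior $\{1,\dots,N_{t^*}-1\}$, which translates into a mild lower bound on $N_{t^*}$ relative to $k$ (essentially $N_{t^*}\ge k-1$). This is automatic once $n$ is moderately large compared to $k$; for the remaining small-parameter regimes the target bound $Ck^{n+1}/n^{1/4}$ dominates the trivial bound $k^n$ on every coefficient, so those cases follow immediately. A secondary technical point is tracking the constant carefully in the final elementary inequality, where the hypothesis $k\ge 3$ is used.
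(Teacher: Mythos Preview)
Your argument is correct in outline and takes a genuinely different route from the paper. The paper proceeds much more directly: it first observes that one may merge any two colors $c_j,c_{j'}$ that never co-occur in a row without decreasing the target coefficient, which forces $\binom{r}{2}\le n\binom{k}{2}$ and hence $r<k\sqrt{2n}$. By averaging, the most frequent color (say $c_1$) then appears in $\beta_1\ge nk/r>\sqrt{n/2}$ rows. The coefficient is crudely bounded by $\binom{\beta_1}{\alpha_1}(k-1)^{\beta_1-\alpha_1}k^{n-\beta_1}$ (choose which $\alpha_1$ of the $\beta_1$ rows containing $c_1$ actually pick it), and Lemma~\ref{lem:binomdistbound} applied to $\mathrm{Bin}(\beta_1,1/k)$ gives $c^*/k^n<Ck/\sqrt{\beta_1(k-1)}<C\sqrt{2}\,k/(\sqrt{k-1}\,n^{1/4})$, which is at most $Ck/n^{1/4}$ for $k\ge 3$. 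The degenerate cases $\alpha_1\in\{0,\beta_1\}$ are disposed of with the one-line estimate $(1-1/k)^{\beta_1}\le e^{-\sqrt{n/2}/k}$.

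Your random-bipartition plus pigeonhole extraction of a homogeneous $\mathrm{Bin}(N_{t^*},t^*/k)$ summand is more elaborate but yields the stronger intermediate estimate $c^*/k^n\le 2C\sqrt{k/n}$, i.e.\ decay $n^{-1/2}$ rather than the paper's $n^{-1/4}$; you then give this gain back to match the stated bound. The price is the boundary issue you flag: Lemma~\ref{lem:binomdistbound} as stated excludes $a\in\{0,N_{t^*}\}$, and your fallback ``$k^n\le Ck^{n+1}/n^{1/4}$ in the small-parameter regime'' requires $n^{1/4}\le Ck$, which is not quite automatic for all $(n,k)$ where $N_{t^*}$ could be small. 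This does work out (for instance via the direct bound $\Pr[W_1=0]\le(1-1/k)^{N_{t^*}}$ combined with $N_{t^*}t^*(k-t^*)\ge nk/4$), but it needs a couple of lines rather than a sentence. The paper sidesteps this entirely because its binomial always has the fixed parameter $p=1/k$.
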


\begin{proof}
Let $P_i=x_{i,1}+x_{i,2}+\ldots+x_{i,k}$.
If there is a pair of variables $c_j$ and $c_{j'}$ such that no $P_i$ contains both of them, then replace all appearances of $c_{j'}$ with $c_{j}$ in $S$ to obtain a new polynomial $S'$.
Since the coefficient of $c_j^{\alpha_j+\alpha_{j'}}\prod_{i\neq j,j'}c_i^{\alpha_i}$ in $S'$ is at least the coefficient of $c_1^{\alpha_1}\ldots c_r^{\alpha_r}$ in $S$, we may make the assumption that every pair of variables $c_j$ and $c_{j'}$ appear together in at least one term.
Therefore we have ${{r}\choose{2}}\leq n{{k}\choose{2}}$, which implies that
$$r\leq \frac{1+\sqrt{1+4nk(k-1)}}{2}<\sqrt{2nk^2}.$$

For each $i\in [r]$, let $\beta_i$ be the number of terms that contain $c_i$.
Without loss of generality assume that $\beta_1\geq \beta_i$ for each $i$.
By the choice of $\beta_1$, we have that $\beta_1\ge \CL{nk/r}$.
Because $r<\sqrt{2nk^2}$, we have 
\begin{equation}\label{eq1}
\beta_1> \frac{nk}{\sqrt{2nk^2}}=\sqrt{\frac{n}{2}}.\end{equation}

The coefficient $c^*$ of $c_1^{\alpha_1}\ldots c_r^{\alpha_r}$ in $S$ is at most ${{\beta_1}\choose{\alpha_1}}(k-1)^{\beta_1-\alpha_1}(k)^{n-\beta_1}$.
Note that if $\alpha_1=0$, then this value is at most 
\begin{equation*}
c^*\leq (k-1)^{\beta_1}k^{n-\beta_1} \le (k-1)^{\CL{nk/r}}k^{n-\CL{nk/r}}= k^n\left(1-\frac 1k\right)^{\CL{nk/r}}.
 \end{equation*}
Using our upper bound~\eqref{eq1} on $r$, and the inequality $(1-x)^y\leq e^{-xy}$ (true for all $0\leq x<1$, $y\geq 0$) we have
\begin{equation*}
c^*  \le k^n \left(1-\frac 1k\right)^{\sqrt{n/2}} \le \frac{k^n}{e^{\sqrt{n/2k^2}}} <\frac{Ck^{n+1}}{\sqrt[4]{n}}.
\end{equation*}
If $\alpha_1=\beta_1$, then the coefficient is at most $k^{n-\beta_1}$, which is also bounded by $\frac{Ck^{n+1}}{\sqrt[4]{n}}$ by the argument above.
Thus we may assume that $0< \alpha_1<\beta_1$.

By Lemma~\ref{lem:binomdistbound}, ${n\choose a}p^a(1-p)^{n-a}<\frac{C}{\sqrt{np(1-p)}}$ when $a$ and $n$ are positive integers with $a<n$ and $p\in(0,1)$.
Therefore, letting $a=\alpha_1$, $n=\beta_1$, and $p=\frac 1k$, we have
$$\frac{c^*}{k^n}\leq {{\beta_1}\choose{\alpha_1}}\frac{1}{k^{\alpha_1}} \left( \frac{k-1}{k}\right)^{\beta_1-\alpha_1}<
   \frac{Ck}{\sqrt{\beta_1 (k-1)}}.$$
Apply~\eqref{eq1} again, we have that the maximum coefficient in $S$ satisfies
\begin{equation*}
c^*< \frac{C\sqrt{2} \cdot k^{n+1}}{\sqrt{k-1}\sqrt[4]{n}} < \frac{C\cdot k^{n+1}}{\sqrt[4]{n}}.\qedhere
\end{equation*}
\end{proof}

Given Lemmas~\ref{lem:bigcoeff}, \ref{lem:cubic}, and~\ref{lem:bigcoeffk}, we can now show that with lists of size $k$ we are able to find a distinguishing colorings of $K_n\Box K_m$ for $n$ sufficiently large where $m=k^n(1-o(1))$.

\begin{theorem}\label{thm:k^n}
For $n$ sufficiently large and $n<m\le  k^n(1+o(1))$,
$$D_{\ell}(K_{n}\Box K_{m})\le k.$$
\end{theorem}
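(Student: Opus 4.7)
The plan is to follow the template of the proof of Lemma~\ref{lem:cubic}, replacing its two-color coefficient bound (Lemma~\ref{lem:bigcoeff}) by the $k$-color analogue (Lemma~\ref{lem:bigcoeffk}) so as to accommodate lists of size $k\ge 3$. Since the case $k=2$ is precisely Lemma~\ref{lem:cubic}, I may assume $k\ge 3$. Fix a list assignment $L$ with $|L(v)|=k$ for every vertex of $G=K_n\Box K_m$. First, isolate a reference set $A$ of columns: let $A'$ be the set of list-uniform columns (all vertices in the column share one common $k$-element list), and set $A:=A'$ if $|A'|\ge n+1$ and otherwise pad $A'$ with arbitrary extra columns to obtain $|A|=n+1$. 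Pick a 2-element sublist $L'(v)\subseteq L(v)$ for every $v\in V(A)$ and apply Lemma~\ref{lem:nn+1} to $G[V(A)]\cong K_n\Box K_{n+1}$; this produces a distinguishing $L'$-coloring of $G[V(A)]$ that, as in Lemma~\ref{lem:cubic}, forbids every nontrivial row permutation of $G$.

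For each remaining column $C_j$ in $G-A$, the task is to exhibit at least $m-|A|$ list-colorings whose color pattern (multiset of colors) does not coincide with that of any column of $A$, and then greedily pick a color vector for each such $C_j$ that is distinct from every color vector already chosen in $G-A$. Split into two cases exactly as in Lemma~\ref{lem:cubic}. In Case 1, some small set of colors accounts for all but a negligible fraction of the lists of $C_j$, and a direct binomial-coefficient count, imitating the $\binom{3n/4-2n^{2/3}}{n/8-n^{2/3}-1}$ estimate from Case 1 of Lemma~\ref{lem:cubic}, furnishes at least $k^n(1-o(1))$ good colorings. In Case 2, the lists are diverse. Process the colors $c_1,c_2,\dots$ of $\bigcup_i L(v_{i,j})$ in decreasing order of their ``fresh-appearance'' counts $k(h)$, and at each step select, by pigeonhole, how many as-yet-uncolored vertices receive $c_h$ so that the number of columns of $A$ whose partial pattern still matches that of $C_j$ shrinks by a factor of $k(h)+1$. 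Halt once the surviving match-count drops below a polynomial threshold $n^{\alpha}$ with $\alpha<\tfrac14$, and let $\kappa=\sum k(h)$ be the number of pre-colored vertices. By Lemma~\ref{lem:bigcoeffk}, the number of completions of the pre-coloring that match any surviving column of $A$ is at most $n^{\alpha}\cdot Ck^{(n-\kappa)+1}/\sqrt[4]{n-\kappa}$; summed over all specific vertex assignments consistent with the chosen count vector and combined with Lemma~\ref{lem:binomdistbound}, this yields the sharper form $\log_{1.09}n/\sqrt{2n}$ for the bad fraction, so at least $k^n\bigl(1-\tfrac{\log_{1.09}n}{\sqrt{2n}}\bigr)\ge m$ good colorings remain.

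With at least $m-|A|$ good colorings per column of $G-A$, greedily assign pairwise-distinct color vectors to them. The verification that the resulting $L$-coloring is distinguishing reproduces the end of the proof of Lemma~\ref{lem:cubic}: the coloring of $V(A)$ distinguishes the rows of $G$; the color patterns separate the columns of $A$ from those of $G-A$; the distinguishing coloring on $G[V(A)]$ separates columns within $A$; and distinct color vectors separate columns within $G-A$. The main technical obstacle is the Case 2 calibration: one must choose $\alpha$ just below $\tfrac14$ so that Lemma~\ref{lem:bigcoeffk}'s $1/\sqrt[4]{n}$ saving dominates the surviving match-count, while simultaneously keeping $\kappa$ small enough, relative to the threshold $n^{\alpha}$, that the total number of colorings of $C_j$ compatible with the chosen count vector is still a large fraction of $k^n$. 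The $\sqrt{2n}$ denominator in the final error term traces back to the inequality $\beta_1>\sqrt{n/2}$ from the proof of Lemma~\ref{lem:bigcoeffk}, while the $\log_{1.09}n$ numerator reflects the number of pigeonhole iterations needed to squeeze the surviving match-count through to $n^{\alpha}$.
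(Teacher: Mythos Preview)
Your approach differs substantially from the paper's. The paper does \emph{not} redo the two-case analysis of Lemma~\ref{lem:cubic} for lists of size $k$. Instead it bootstraps: take $A$ to be the first $\lceil\log_{1.09}n\rceil$ columns (not $n+1$), and apply Lemma~\ref{lem:cubic} itself to $G[V(A)]\cong K_{\lceil\log_{1.09}n\rceil}\Box K_n$ using $2$-element sublists of the given $k$-lists. With $|A|$ only logarithmic, the crude subtraction for each remaining column already suffices: there are $k^n$ list-colorings, and by Lemma~\ref{lem:bigcoeff} (for $k=2$) or Lemma~\ref{lem:bigcoeffk} (for $k\ge 3$) each of the $|A|$ forbidden patterns matches at most $Ck^{n+1}/\sqrt[4]{n}$ of them, so at least
\[
k^n-\lceil\log_{1.09}n\rceil\cdot\frac{Ck^{n+1}}{\sqrt[4]{n}}=k^n(1-o(1))
\]
good colorings survive. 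No Case~1/Case~2 split, no pigeonhole iteration on colors, no calibration of a threshold exponent~$\alpha$.

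Your route---take $|A|=n+1$ and replay the internal machinery of Lemma~\ref{lem:cubic}---might be made to work, but as written it has real gaps. First, when $|A'|\ge n+1$ you set $A=A'$ and then assert $G[V(A)]\cong K_n\Box K_{n+1}$, which is false if $|A'|>n+1$; Lemma~\ref{lem:nn+1} does not apply, and you would need a separate argument for that branch (as Lemma~\ref{lem:cubic} does). Second, with $|A|=n+1$ the direct subtraction $k^n-(n+1)\cdot Ck^{n+1}/\sqrt[4]{n}$ is negative, which is exactly why you are forced into the delicate Case~2 calibration; but the proposal does not actually carry it out. You claim the pigeonhole stage uses ``$\log_{1.09}n$'' iterations to push the surviving match-count below $n^{\alpha}$, yet with $|A|=n+1$ the product $\prod_h(k(h)+1)$ must exceed $(n+1)/n^{\alpha}$, and nothing in your sketch ties either the number or the sizes of the $k(h)$ to $\log_{1.09}n$. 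Nor do you explain how summing the Lemma~\ref{lem:bigcoeffk} bound over ``all specific vertex assignments consistent with the chosen count vector,'' combined with Lemma~\ref{lem:binomdistbound}, produces the asserted $\log_{1.09}n/\sqrt{2n}$ error term; that combination is the crux of your argument and is left as a slogan. The paper's bootstrapping trick sidesteps all of this by making $|A|$ logarithmic rather than linear in $n$.
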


\begin{proof}
Let $A$ be the set of the first $\CL{\log_{1.09}n}$ columns in $G$.
By Theorem~\ref{lem:cubic}, there is a distinguishing coloring of the graph induced by $A$.

First consider the case when $k=2$.
By Lemma~\ref{lem:bigcoeff}, each column in $A$ has a color pattern that matches the color pattern of at most $\binom{n}{\CL n2}$ of these colorings.
Therefore the number of colorings of each column in $G-A$ that does not have a color pattern of a column in $A$ is at least
\begin{align*}
k^n-\CL{\log_{1.09}n}\binom{n}{\CL{n/2}}&\ge k^n\left(1-\frac{1}{\sqrt{3n/2+1}} \right)\\
&=k^n(1-o(1)).
\end{align*}
Otherwise, $k\ge 3$ and each remaining column has $k^n$ colorings.
By Lemma~\ref{lem:bigcoeffk}, each column in $A$ has a color pattern that matches the color pattern of at most $\frac{Ck^{n+1}}{\sqrt[4]{n}}$ of these colorings.
Therefore the number of colorings of each column in $G-A$ that does not have a color pattern of a column in $A$ is at least
\begin{align*}
k^n-\CL{\log_{1.09}n}\left(\frac{Ck^{n+1}}{\sqrt[4]{n}}\right)&=k^n\left(1-\frac{Ck\CL{\log_{1.09}n} }{\sqrt[4]{n}} \right)\\
&=k^n(1-o(1)).
\end{align*}
To complete the coloring, we greedily select colorings for the columns in $G-A$ that do not have the same color pattern as the columns in $A$ so that no two of these columns have the same color vector.

As in the proof of Lemma~\ref{lem:cubic}, this coloring is distinguishing since the columns in $A$ are identifiable by their color patterns.
The coloring on $A$ is distinguishing on those columns, and hence it distinguishes the rows of $G$.
All remaining columns are distinguishable due to their distinct color vectors.
\end{proof}

We do observe here that the value of $n$ is quite large for Theorem~\ref{thm:k^n} to apply.
Since we depend on the width of the first collection of columns to be given by $\CL{\log_{1.09}n}$ to apply Lemma~\ref{lem:cubic}, we require $\CL{\log_{1.09}n}\ge 2222$.
Thus $n\ge 1.45104 \times 10^{83}$ is sufficient for the theorem.

\section{Open Questions}

Question~\ref{q:FFG} is the clearest future direction to go with this work.
We ask the question for the list coloring of grids here.
\begin{conjecture}
For all $n$ and $m$, $D_\ell(K_n\Box K_m)$ equals $D(K_n\Box K_m)$.
\end{conjecture}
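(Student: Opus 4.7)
The plan is to split by the value of the ordinary distinguishing number. By Theorem~\ref{thm:distknkm}, for each $n,k\ge 2$ there is a threshold $m^* = m^*(n,k)$ within one of $k^n - \lceil \log_k n\rceil$ such that $D(K_n\Box K_m) = k$ for $(k-1)^n < m \le m^*$ and $D(K_n\Box K_m) = k+1$ for $m^* < m \le k^n$. The goal is to establish $D_\ell \le k$ in the first sub-range and $D_\ell \le k+1$ in the second, which would match $D$ exactly in each regime.

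The upper sub-range should be the easier of the two. With lists of size $k+1$ each column affords $(k+1)^n$ potential colorings, an exponential surplus over the column count $m\le k^n$. I would re-run the proof of Theorem~\ref{thm:k^n} with $k$ replaced by $k+1$: the coefficient bound $Ck^{n+1}/\sqrt[4]{n}$ from Lemma~\ref{lem:bigcoeffk}, multiplied by the $\lceil \log_{1.09} n\rceil$ identifier columns, is negligible compared to $(k+1)^n$, and the greedy selection of distinct color vectors for the non-identifier columns proceeds with enormous slack.

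For the lower sub-range, Theorem~\ref{thm:k^n} already handles $m \le k^n(1-o(1))$, leaving only the narrow slab $k^n(1-o(1)) < m \le k^n - \Theta(\log_k n)$. The $o(1)$ loss originates from the product of $\lceil \log_{1.09}n\rceil$ identifiers with the coefficient bound of Lemma~\ref{lem:bigcoeffk}. I would (i) shrink the identifier block by invoking Lemma~\ref{lem:nn+1} to distinguishingly color a single fixed block of $n+1$ columns all at once, rather than greedily constructing $\lceil \log_{1.09} n\rceil$ identifiers via Lemma~\ref{lem:cubic}, and (ii) sharpen Lemma~\ref{lem:bigcoeffk} by separating the near-list-uniform case $\beta_1 \approx n$, where the true coefficient is far below the stated bound, from the balanced case, where it is already well controlled. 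A final Hall-type matching, with non-identifier columns on one side and candidate color vectors on the other, and an edge for each color vector whose color pattern avoids every identifier pattern, would then produce distinct color vectors for all remaining columns simultaneously.

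The hard part will be the razor's-edge regime $m = m^*$, where the entropy of color vectors is virtually saturated: almost every vector in $\{1,\dots,k\}^n$ must be used, so eliminating even a single binomial coefficient's worth of vectors to avoid one identifier pattern may already be too expensive. Closing this last gap is likely to require either a probabilistic construction with strong concentration on the number of valid color vectors per column, or a single global Combinatorial Nullstellensatz polynomial of degree $\Theta(nm)$ whose top monomial simultaneously enforces row- and column-distinguishability. Bounding the coefficient of that top monomial --- a far more delicate cousin of the product-of-factorials computation in the proof of Lemma~\ref{lem:nn+1} --- is, in my view, the essential difficulty standing between the present partial results and the full conjecture.
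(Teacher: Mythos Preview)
The statement you are attempting to prove is presented in the paper as an open \emph{conjecture} in the ``Open Questions'' section; the paper gives no proof of it, so there is nothing to compare your argument against. Your proposal is, appropriately, a research outline rather than a proof, and you yourself identify the essential gap at the end: closing the razor's-edge regime near $m^*$ is precisely what the paper was unable to do and leaves open.

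One concrete error in your outline is worth flagging. In step~(i) you propose to ``shrink the identifier block'' by replacing the $\lceil \log_{1.09} n\rceil$ identifier columns used in Theorem~\ref{thm:k^n} with an $(n+1)$-column block colored via Lemma~\ref{lem:nn+1}. But $n+1$ is vastly larger than $\lceil \log_{1.09} n\rceil$ for every $n$ (and astronomically so in the regime $n\ge 1.45\times 10^{83}$ where Theorem~\ref{thm:k^n} applies), so this enlarges the identifier block rather than shrinking it. The $o(1)$ loss in Theorem~\ref{thm:k^n} is $Ck\lceil\log_{1.09} n\rceil/\sqrt[4]{n}$; replacing the logarithmic factor by $n+1$ turns it into $Ck\,n^{3/4}$, which is not $o(1)$ at all. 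Any genuine improvement here has to come from your step~(ii), sharpening Lemma~\ref{lem:bigcoeffk}, or from a wholly different mechanism; step~(i) as written moves in the wrong direction. Separately, your treatment of the upper sub-range inherits the enormous threshold on $n$ from Theorem~\ref{thm:k^n} and says nothing about small $n$, which the conjecture also covers.
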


In the process of proving Theorem \ref{thm:listdistknkm}, we encountered a seemingly simple conjecture that we approximated with Lemma~\ref{lem:bigcoeffk}.
We present this conjecture here.
\begin{conjecture}
Let $k$ be an integer that is at least three and let
$$S(x_{1,1},\ldots,x_{1,k},x_{2,1},\ldots,x_{2,k},\ldots, x_{n,1},\ldots,x_{n,k})=\prod_{i=1}^n(x_{i,1}+\ldots+x_{i,k}).$$
For any assignment of the $x_{i,j}$ to formal variables from $\{c_1,\ldots,c_r\}$ such that $x_{i,j}$ are all distinct for all $j\in k$, the coefficient of each monomial $\prod_{i=1}^rc_i^{\alpha_i}$ in $S$ are all at most the balanced $k$-multinomial coefficient
$$\binom{n}{\CL{n/k},\ldots,\FL{n/k}}.$$
\end{conjecture}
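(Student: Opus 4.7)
The plan is to start by recording a combinatorial interpretation: writing $B_j=\{i\in[n]:c_j\in L_i\}$, the coefficient of $\prod_{j=1}^r c_j^{\alpha_j}$ in $S$ equals the number of choice functions $g:[n]\to\{c_1,\ldots,c_r\}$ with $g(i)\in L_i$ and $|g^{-1}(c_j)|=\alpha_j$, or equivalently the number of ordered partitions $(S_1,\ldots,S_r)$ of $[n]$ with $S_j\subseteq B_j$ and $|S_j|=\alpha_j$. In the ``uniform'' configuration where every list is $\{c_1,\ldots,c_k\}$, we have $B_1=\cdots=B_k=[n]$ and $B_{k+1}=\cdots=B_r=\varnothing$, and the coefficient collapses to the multinomial $\binom{n}{\alpha_1,\ldots,\alpha_k}$, whose maximum over $\alpha$ is exactly the balanced multinomial coefficient on the right-hand side. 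So the game is to show that no other list configuration beats the uniform one.

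My proposal is a compression/shifting argument. The basic move is: pick two colors $c_a,c_b$ with $|B_a|\le|B_b|$, pick an index $i\in B_b\setminus B_a$, and modify $L_i$ by replacing $c_b$ with $c_a$. Iterating such swaps eventually collapses every occurrence of a color outside $\{c_1,\ldots,c_k\}$ into those first $k$ colors, forcing every list to be $\{c_1,\ldots,c_k\}$ and recovering the uniform case. The whole strategy hinges on showing that a single such compression weakly increases $\max_\alpha [\prod c_j^{\alpha_j}]S$, after which the conjecture follows by induction on the number of compressions needed to reach uniformity.

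The hard part, and the real obstacle, is precisely this monotonicity of the maximum under compression. For a fixed profile $\alpha$, splitting the count by the value of $g(i)$ gives a clean before/after bijection on most of the contributions, but the optimizing profile $\alpha^\star$ can itself shift under the swap, so one is forced to compare coefficients across different $\alpha$. A natural sufficient strengthening is a full majorization statement: after sorting, the coefficient sequence post-compression dominates the pre-compression sequence. This stronger claim would likely need an explicit injection between families of partitions witnessing the two counts, or a coupling between uniformly chosen choice functions before and after the swap that preserves $|g^{-1}(\cdot)|$ up to a predictable correction at index $i$.

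If the compression route stalls, I would try two alternatives. First, an entropy/Shearer approach: view a uniformly random choice function $g$, so the coefficient in question equals $k^n\Pr[(|g^{-1}(c_1)|,\ldots,|g^{-1}(c_r)|)=\alpha]$, and try to bound this probability by $\binom{n}{\lceil n/k\rceil,\ldots,\lfloor n/k\rfloor}/k^n$ via a local-limit-type estimate together with the fact that each coordinate of $g$ has entropy at most $\log k$. Second, an induction on the excess $r-k$ (eliminating one color at a time): conditioning on which $s$ of the $|B_r|$ factors actually contribute $c_r$ reduces the problem to a product with some factors of length $k$ and some of length $k-1$, so the induction would need to be strengthened to a mixed-length version of the conjecture bounding coefficients of $\prod_i(\sum_{j=1}^{k_i}x_{i,j})$ by a suitable mixed multinomial. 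Both alternatives trade combinatorial monotonicity for more bookkeeping but seem to have a fighting chance of recovering the right polynomial factor of $n^{-(k-1)/2}$ that Lemma~\ref{lem:bigcoeffk} loses.
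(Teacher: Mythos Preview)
This statement is listed in the paper as an \emph{open conjecture}; the paper proves only the weaker bound of Lemma~\ref{lem:bigcoeffk} and explicitly places the sharp multinomial bound among its open questions. There is thus no proof in the paper to compare against, and a complete argument here would settle a problem the authors leave open.

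Your proposal is a program, not a proof, and you correctly isolate the single-swap monotonicity of $\max_\alpha[\prod_jc_j^{\alpha_j}]S$ as the crux. Two further remarks. First, even granting monotonicity, termination at the uniform configuration needs care: the uniform configuration is indeed the unique fixed point of your move (no pair $(a,b)$ with $|B_a|\le|B_b|$ and $B_b\setminus B_a\ne\varnothing$ exists precisely when all $B_j$ coincide, which forces $r=k$), but a naive iteration can oscillate, since a swap with $|B_a|=|B_b|$ \emph{increases} $\sum_j|B_j|^2$; you would need a specific swap-selection rule together with a strictly decreasing potential. Second, and more fundamentally, the local monotonicity is not visibly easier than the global conjecture: writing $N(\alpha)$ and $N'(\alpha)$ for the coefficient before and after the swap at index $i$, one has $N'(\alpha)-N(\alpha)=N_{g(i)=c_b}(\alpha')-N_{g(i)=c_b}(\alpha)$, where $\alpha'$ shifts one unit from $\alpha_a$ to $\alpha_b$, a difference of no fixed sign, so comparing the maxima forces exactly the cross-profile comparison that makes the problem hard. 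Your entropy and mixed-length fallback routes are plausible heuristics, but each would require a genuinely new ingredient to recover the sharp constant rather than the $n^{-1/4}$ savings of Lemma~\ref{lem:bigcoeffk}.
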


\appendix
\section{An inequality concerning the binomial distribution}\label{sec:App}

Here we prove  Lemma~\ref{lem:binomdistbound}.

\medskip
\noindent
{\bf Lemma~\ref{lem:binomdistbound}.}\enskip
{\sl
If $a$ and $n$ are positive integers with $a<n$, and $0<p<1$ is a real number, then $$ {n\choose a}p^a(1-p)^{n-a}<\frac{C}{\sqrt{np(1-p)}},$$ where $C=\left(3/2e\right)^{3/2}=0.409916\dots$.
}
\medskip

\begin{proof}
Define $f(n,a,p)= \sqrt{n} {n\choose a}p^{a+(1/2)}(1-p)^{n-a+(1/2)}$.
Our aim is to show that 
$$f(n,a,p)< C.$$
For $1\leq a\leq  n-1$, the function $g(p)=p^{a+(1/2)}(1-p)^{n-a+(1/2)}$ takes its maximum in the interval $[0,1]$ when 
\begin{align*}
0&=g'(p)\\
&=p^{a+(1/2)}(n-a-(1/2))(1-p)^{n-a-(1/2)}+(a+(1/2))p^{a-(1/2)}(1-p)^{n-a+(1/2)}\\
&=p^{a-(1/2)}(1-p)^{n-a-(1/2)}\left[\left(n-a+\frac{1}{2}\right)p-\left(a+\frac{1}{2}\right)(1-p)\right].
\end{align*}
Therefore
$$f(n,a,p) \leq f\left(n,a, \frac{a+(1/2)}{n+1}\right).$$
Define
$$f(n,a)=f\left(n,a, \frac{a+(1/2)}{n+1}\right)=\sqrt n \binom na \left(\frac{a+(1/2)}{n+1}\right)^{a+(1/2)}\left(1-\frac{a+(1/2)}{n+1}\right)^{n-a+(1/2)}.$$
Since $f(2,1)=\sqrt{2}/4<0.35356$ and $f(3,2)=f(3,1)= (675/4096)\sqrt{5}<0.36850$, these values are less than $C$.
From now on we may suppose that $n\geq 4$.
Also, $f(n,a)=f(n, n-a)$, so from now on we may suppose that $1\leq a\leq \frac{n}{2}$.

We now wish to show that $f(n,a)$ is decreasing in terms of $a$ so that $f(n,1)\ge f(n,a)$.
We have that for $2\le a\le n/2$,
\begin{align*}
\frac{f(n,a-1)}{f(n,a)}&=\frac{\sqrt n \binom n{a-1} \left(\frac{a-\frac 12}{n+1}\right)^{a-\frac 12}\left(1-\frac{a-\frac 12}{n+1}\right)^{n-a+\frac 32}}{\sqrt n \binom n{a} \left(\frac{a+\frac 12}{n+1}\right)^{a+\frac 12}\left(1-\frac{a+\frac 12}{n+1}\right)^{n-a+\frac 12}}\\
&=\frac{\frac{1}{n-a+1}\left(a-\frac 12\right)^{a-\frac 12}\left(n-a+\frac32 \right)^{n-a+\frac 32}}{\frac{1}{a}\left(a+\frac 12\right)^{a+\frac 12}\left(n-a+\frac12\right)^{n-a+\frac12}}\\
&=\frac{a\frac{(a-\frac 12)^{a-\frac 12}}{(a+\frac 12)^{a+\frac 12}}}{(n-a+1)\frac{(n-a+\frac 12)^{(n-a+\frac 12)}}{(n-a+\frac 32)^{(n-a+\frac 32)}}}.\\
\end{align*}
Define the sequence
$$X(a)=a\frac{{(a-(1/2))}^{a-(1/2)}} {{(a+(1/2))}^{a+(1/2)}}.$$
Thus we wish to determine if $X(a)/X(n-a+1)>1$ under the condition that $1\le a\le n/2$.
Taking the logarithm of both sides yields the following inequality that we wish to prove:
\begin{align*}
0&<\log(X(a)/X(n-a+1))\\
&=\log(X(a)) - \log(X(n-a+1)).
\end{align*}
For all real  $a>0$, define the continuous function $x(a)= \log X(a+(1/2))$, so
$$  x(a)= \log (a+(1/2)) + a \log a -(a+1) \log (a+1).  $$
We claim that $x(a)$ is strictly decreasing.

To prove that $x(a)$ is decreasing, we use the fact that if $x(a):(0, \infty)\to \mathbb R$ is a real function that is convex and $\lim_{a\to \infty} x(a)$ exists and is finite, then $x(a)$ is strictly decreasing.
The limit of $x(a)$ exists since
 $$
   \lim_{a\to \infty} x(a)= \lim_{a\to \infty} \log \frac{(a+\frac12)}{(a+1)} + \lim_{a\to \infty}  \log \left(\left(\frac{a}{a+1}\right)^a\right) =-1.
  $$
We have that 
$$x'(a)= (a+1/2)^{-1}+ (1+ \log a) -(1+ \log (a+1))$$
and therefore
\begin{align*}
  x''(a)&= \frac{-1}{(a+(1/2))^2} + \frac{1}{a}-\frac{1}{a+1}\\
  & = \frac{-1}{a^2+a+ (1/4)} + \frac{1}{a^2+a}\\
  &>0.
\end{align*}
Hence the function $x(a)$ is convex (for $a>0$) and has a finite limit, so $x(a)$ is strictly decreasing.
Therefore we have shown that $f(n,1)\ge f(n,a)$, so
$$f(n,a) \leq n^{3/2}\left(\frac{3/2}{n+1}\right)^{3/2}
       \left(\frac{n-(1/2)}{n+1}\right)^{n-(1/2)}.$$

Finally, define the function
$$f(n) = n^{3/2}\left(\frac{3/2}{n+1}\right)^{3/2}
       \left(\frac{n-(1/2)}{n+1}\right)^{n-(1/2)}.$$
For all real $n> 1$ define the function $y(n)= \log f(n)$, so
$$
  y(n)= \frac{3}{2}\log \frac32+\frac{3}{2}\log n + \left(n-\frac12\right) \log \left(n-\frac12\right) -(n+1) \log (n+1). $$
We claim that $y(n)$ is strictly increasing.

To prove that $y(n)$ is increasing, we use the fact that if $y(n):(1, \infty)\to \mathbb R$ is a real function that is concave and $\lim_{n\to \infty} y(n)$ exists and is finite, then $y(n)$ is strictly increasing.
The limit of $y(n)$ exists since
$$\lim_{n\to \infty} y(n)=\frac32\log \frac32-\frac 32.$$
We have that
$$y'(n)= (3/2)n^{-1}+ \left(1+ \log  \left(n-\frac12\right)\right) -(1+ \log (n+1)),$$
and therefore
\begin{align*}
y''(n)&= \frac{-3/2}{n^2} + \frac{1}{n-(1/2)}-\frac{1}{n+1}\\
& =  \frac{-3/2}{n^2} + \frac{3/2}{n^2+(n/2)-(1/2)}\\
&<0.
\end{align*}
Hence the function $y(n)$ is concave (for $n>1$) and has a finite limit, so $y(n)$ is strictly increasing.
Hence $f(n)$ is also increasing, and $f(n)<(3/2e)^{3/2}$.

Therefore we have
\begin{align*}
f(n,a,p)&\le f\left(n,a,\frac{a+\frac12}{n+1}\right)\\
&\le f(n,1)\\
&< \lim_{n\to\infty} f(n,1)\\
&=\left(\frac{3}{2e}\right)^{3/2}.\qedhere
\end{align*}
\end{proof}

 \bigskip
Considering the sequence of $f(n,1)$ one can see that the value of $C$ is the best possible.

It is well known that ${n \choose n/2}< 2^n /\sqrt{2\pi n}$ for all even $n\geq 2$, so one would expect that Lemma~\ref{lem:binomdistbound} should hold with
$D=1/\sqrt{2\pi}$, but it is only $0.398942\dots $, about 2.7\% smaller than $C$.
However, using the first line of the proof and the precise Sterling formula
(i.e., $n!=n^ne^{-n}\sqrt{2\pi n} \exp(1/(12n+\Theta_n))$ where $0<\Theta_n<1$),
 one can prove that if $a\to \infty, \, a\leq n/2$, then $f(n,a)= (1+o(1))/\sqrt{2\pi}$.


\begin{thebibliography}{9}
\frenchspacing

\bibitem{albertson}
M. Albertson,
Distinguishing Cartesian powers of graphs.
{\it Electron. J. Combin.} {\bf 12} (2005), Note 17, 5 pp. (electronic).


\bibitem{AC}
M. Albertson and K.L.\ Collins,
Symmetry breaking in graphs,
{\it Electron. J. Combin.} {\bf 3} (1996), no.\ 1, Research Paper 18, 17 pp. (electronic).

\bibitem{Alon}
N.\ Alon, Combinatorial Nullstellensatz, Recent trends in combinatorics, {\it Combin. Probab. Comput.} {\bf 8} (1999), no. 1-2, 7--29.

\bibitem{ACD}
V. Arvind, C. Cheng and N. Devanur,
On computing the distinguishing numbers of planar graphs and beyond: a counting approach, {\it SIAM J.\ Discrete Math.} {\bf 22} (2008), no.\ 4, 1297--1324.


\bibitem{AD}
V.\ Arvind and N.\ Devanur,
Symmetry breaking in trees and planar graphs by vertex coloring,
in {\it Proceedings of the 8th Nordic Combinatorial Conference}, Aalborg University, Aalborg, Denmark, 2004.

\bibitem{babaiarXiv} L. Babai, Graph isomorphism in quasipolynomial time,
arXiv preprint arXiv:1512.03547[cs.DS], 89 pp.

\bibitem{babai80} L. Babai, On the complexity of canonical labeling of strongly regular graphs.
\textit{SIAM J. Comput.} \textbf{9} (1980), 212--216.

\bibitem{BB}
E. Beckenbach and R.\ Bellman,
\newblock {\sl Inequalities,}
\newblock Springer-Verlag, New York, 1961. 

\bibitem{BS} A. Burris and R. Schelp, Vertex-distinguishing proper edge-colorings, \textit{J. Graph Theory} \textbf{26} (1997). 73--82.



\bibitem{cheng09}
C.\ Cheng,
On computing the distinguishing and distinguishing chromatic numbers of interval graphs and other results,
{\it Discrete Math.} {\bf 309} (2009), no.\ 16, 5169--5182.

\bibitem{cheng06}
C.\ Cheng,
On computing the distinguishing numbers of trees and forests,
{\it Electron. J. Combin.} {\bf 13} (2006), no.\ 1, Research Paper 11, 12 pp. 






\bibitem{ERT}
P. Erd\H{o}s, A. Rubin, and H. Taylor,
Choosability in graphs,
{\it Congr. Numer.} {\bf 26} (1980), 125--157.


\bibitem{FFG}
M. Ferrara, B. Flesch, and E. Gethner,
List-distinguishing colorings of graphs,
{\it Electron. J. Combin.} {\bf 18} (2011), no.\ 1, Paper 161, 17 pp.

\bibitem{FGHSW}
M. Ferrara, E. Gethner, S. Hartke, D. Stolee, and P. Wenger,
List distinguishing parameters of trees,
{\it Discrete Appl. Math.} {\bf 161} (2013), no.\ 6, 864--869.


\bibitem{FGHSW2}
M. Ferrara, E. Gethner, S. Hartke, D. Stolee, and P. Wenger,
Extending procolorings to distinguish group actions,
arXiv:1405.5558 [math.CO].


\bibitem{FI}
M. Fisher, and G. Isaak,
Distinguishing colorings of Cartesian products of complete graphs,
{\it Discrete Math.} {\bf 308} (2008), no.\ 11, 2240--2246

\bibitem{IW}
P.\ Immel and P.\ Wenger,
The list distinguishing number equals the distinguishing number for interval graphs,
{\it Discuss.\ Math.\ Graph Theory} {\bf 37} (2017), 165--174.



\bibitem{IJK}
W. Imrich, J.\ Jerebic, and S. Klav\v zar,
The distinguishing number of Cartesian products of complete graphs,
{\it European J. Combin.} {\bf 29} (2008), no.\ 4, 922--929.




\bibitem{KCL} M. Karpovsky, K. Chakrabarty and L. Levitin, On a new class of codes for identifying vertices in graphs, \textit{IEEE Trans. Inform. Theory}, \textbf{44} (1998), 599--611.





\bibitem{KZ}
S. Klav\v zar, and X. Zhu,
Cartesian powers of graphs can be distinguished by two labels.
{\it European J. Combin.} {\bf 28} (2007), no. 1, 303--310.


  
\bibitem{LeInf16} F. Lehner, Distinguishing graphs with intermediate growth,
\textit{Combinatorica}, \textbf{36} (2016), 333--347. 
  
\bibitem{Rdist79}
F. Rubin, Problem 729, \textit{J. Recreational Math.} \textbf{11} (1979), 128.


  
\end{thebibliography}
\end{document}